\documentclass[11pt]{amsart}

\scrollmode

\usepackage{times}
\usepackage{amsmath,graphicx}
\usepackage{latexsym}
\usepackage{amscd,amsthm,amssymb}
\usepackage[all]{xy}

\newtheorem{thm}{Theorem}[section]
\newtheorem{prop}[thm]{Proposition}
\newtheorem{lem}[thm]{Lemma}

\newtheorem{conj}{Conjecture}

\theoremstyle{definition}

\newtheorem{defn}[thm]{Definition}

\theoremstyle{remark}

\numberwithin{equation}{section}

\title{The closure of the symplectic cone of elliptic surfaces}
\author{M.~J.~D.~Hamilton}
\address{      Institute for geometry and topology\\
               University of Stuttgart\\
               Pfaffenwaldring 57\\
               70569 Stuttgart\\
               Germany}
\email{mark.hamilton@math.lmu.de}
\date{\today}
\subjclass[2010]{Primary 14J27, 57R17; Secondary 57R50}
\keywords{4-manifold, symplectic cone, elliptic surface, diffeomorphism group}

\begin{document}

\begin{abstract}The symplectic cone of a closed oriented 4-manifold is the set of cohomology classes represented by symplectic forms. A well-known conjecture describes this cone for every minimal K\"ahler surface. We consider the case of the elliptic surfaces $E(n)$ and focus on a slightly weaker conjecture for the closure of the symplectic cone. We prove this conjecture in the case of the spin surfaces $E(2m)$ using inflation and the action of self-diffeomorphisms of the elliptic surface. An additional obstruction appears in the non-spin case.
\end{abstract}

\maketitle

\section{Introduction}

Let $M$ be a closed oriented 4-manifold. We are interested in the set $\mathcal{C}_M$ of real cohomology classes represented by symplectic forms on $M$, called the {\em symplectic cone} of $M$. It is indeed a cone because a non-zero multiple of any symplectic form is again symplectic. We only consider symplectic forms $\omega$ compatible with the orientation, so that $\omega\wedge\omega$ is everywhere positive. It follows that the symplectic cone is a subset of the {\em positive cone} $\mathcal{P}$, given by the set  of elements in $H^2(M;\mathbb{R})$ which have positive square. In fact, according to the proof of Observation 4.3 in \cite{G}, the symplectic cone is always an open subset of the positive cone. If the 4-manifold $M$ does not admit a symplectic form then the set $\mathcal{C}_M$ is empty. It is also useful to denote by $\mathcal{P}^A$ for a non-zero cohomology class $A\in H^2(M;\mathbb{R})$ the set of elements in $\mathcal{P}$ which have positive cup product with $A$. Clearly, $\mathcal{P}^A\cup \mathcal{P}^{-A}$ is a cone. In addition, we set $\mathcal{P}^0=\mathcal{P}$.

The symplectic cone has been determined in the following cases:
\begin{enumerate}
\item $S^2$-bundles over surfaces \cite{McD}.
\item $T^2$-bundles over $T^2$ \cite{Ge}.
\item All 4-manifolds with a fixed point free circle action \cite{Bo, FV1, FV2}.
\item All symplectic 4-manifolds with $b_2^+=1$ \cite{LiLiu}.
\item The $K3$ surface \cite{Li}.
\item Fibre sums along tori of $T^2\times \Sigma_g$ and minimal elliptic K\"ahler surfaces with $b_2^+=1$, for example Enriques or Dolgachev surfaces \cite{DL}.
\end{enumerate}

The simply-connected 4-manifolds among these cases either have $b_2^+=1$ or are diffeomorphic to the $K3$ surface, because the 4-manifolds in (c) have zero Euler characteristic.

From now on we denote by $M$ a simply-connected elliptic surface $E(n)$ without multiple fibres and with the complex orientation. Since by the results mentioned above the symplectic cone is known for $E(1)=\mathbb{CP}^2\#9{\overline{\mathbb{CP}}{}^{2}}$  and the $K3$ surface $E(2)$ we assume that $n\geq 3$. Let $F$ denote the class of the fibre in an elliptic fibration on $M$. We set 
\begin{equation*}
c_1(M)=-(n-2)PD(F),
\end{equation*}
where $PD$ denotes the Poincar\'e dual of the homology class. Note that symplectic forms have well-defined Chern classes, defined by considering any compatible almost complex structure. If $\omega$ is a symplectic form with first Chern class $c_1(M,\omega)$, then $-\omega$ is a symplectic form with first Chern class $-c_1(M,\omega)$. It is known from Seiberg-Witten theory \cite{GS} that every symplectic form on $E(n)$ has up to sign first Chern class equal to $c_1(M)$. It is also known from the theorems of Taubes \cite{T} that for every symplectic structure $\omega$ the Poincar\'e dual of the class $-c_1(M,\omega)$ is represented by an embedded symplectic surface. This implies that $\omega\cdot c_1(M)$ is non-zero, hence the symplectic cone satisfies
\begin{equation*}
\mathcal{C}_M\subset \left(\mathcal{P}^{c_1(M)}\cup \mathcal{P}^{-c_1(M)}\right).
\end{equation*}
A well-known conjecture due to Tian-Jun Li \cite{Li} says that the following holds:
\begin{conj}[Strong conjecture]\label{strong conj}
We have
\begin{equation*}
\left(\mathcal{P}^{c_1(M)}\cup \mathcal{P}^{-c_1(M)}\right)\subset \mathcal{C}_M.
\end{equation*}
Hence every class of positive square whose cup product with the first Chern class of $M$ is non-zero is represented by a symplectic form.
\end{conj}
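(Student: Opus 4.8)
The plan is to represent every class in $\mathcal{P}^{c_1(M)}\cup\mathcal{P}^{-c_1(M)}$ by a symplectic form by combining the symmetry $\omega\mapsto-\omega$, inflation along embedded symplectic surfaces, and the action of orientation-preserving self-diffeomorphisms of $M$ on $H^2(M;\mathbb{R})$. I would first reduce to a single component: since $-\omega$ is symplectic and orientation-compatible whenever $\omega$ is, with $c_1(M,-\omega)=-c_1(M,\omega)$, the cone satisfies $\mathcal{C}_M=-\mathcal{C}_M$, and as $A\mapsto-A$ interchanges $\mathcal{P}^{c_1(M)}$ and $\mathcal{P}^{-c_1(M)}$ it suffices to treat the component containing the K\"ahler classes. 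Because $c_1(M)=-(n-2)PD(F)$ with $n\geq3$, this component is $\mathcal{P}^{-c_1(M)}=\{A\in\mathcal{P}:A\cdot F>0\}$, the half in which the fibre has positive area.

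Next I would build a supply of symplectic classes from the complex geometry and enlarge it by inflation. The fibre $F$ is an embedded symplectic torus with $F^2=0$, so for any symplectic $\omega$ the class $[\omega]+t\,PD(F)$ stays symplectic for all $t\geq0$; inflating in addition along sections (symplectic spheres of negative square) and further symplectic surfaces moves the component of the class in $F^\perp$ while controlling the coefficient of $PD(F)$. The aim of this step is to realize every class in some explicit open sub-cone $U\subset\mathcal{P}^{-c_1(M)}$, for instance those whose $PD(F)$-coefficient is large relative to their orthogonal part.

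Finally I would spread $U$ over the whole component using diffeomorphisms. Since $\mathcal{C}_M$ is invariant under pullback, once $U$ is realized so is $\phi^*U$ for every orientation-preserving $\phi\in\mathrm{Diff}(M)$. Such $\phi$ preserve $\pm c_1(M)$, hence $F$ up to sign, and act on $H^2$ by isometries of the intersection form fixing $F$; quotienting by $F$ yields a unimodular form on $F^\perp/F$, and the goal is to show that the $\mathrm{Diff}(M)$-orbit of $U$ is all of $\mathcal{P}^{-c_1(M)}$.

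The hard part is this last step: determining the image of $\mathrm{Diff}(M)$ in the orthogonal group of the intersection form precisely enough to conclude that the orbit of $U$ sweeps out the cone. This is exactly where spin versus non-spin enters. For the spin surfaces $E(2m)$ the even form on $F^\perp/F$ carries enough diffeomorphism-realizable reflections to act transitively on the primitive classes of any fixed square and sign, which lets the orbit of $U$ fill the component; in the non-spin case a spinor-norm type obstruction shrinks the realizable subgroup and blocks part of the cone, so these methods fall short of the full conjecture. A secondary difficulty is the behaviour near the boundary $\{A^2=0\}$ and near the wall $\{A\cdot F=0\}$, which is why the cleanest statement obtainable this way concerns the closure $\overline{\mathcal{C}_M}$ rather than the open cone itself.
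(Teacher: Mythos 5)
The statement you are trying to prove is stated in the paper as a \emph{conjecture}; the paper offers no proof of it, and explicitly proves only the weaker closure statement $\left(\mathcal{P}^{c_1(M)}\cup \mathcal{P}^{-c_1(M)}\right)\subset \overline{\mathcal{C}}_M$ for the spin surfaces $E(2m)$, with a further restriction to $\mathcal{P}^{\pm c_1(M)>}$ in the non-spin case. Your proposal is, in outline, exactly the paper's strategy for that weaker theorem: reduce by $\omega\mapsto-\omega$ to the component with $\omega\cdot PD(F)>0$, produce an explicit subcone of symplectic classes by inflation, and sweep it around with self-diffeomorphisms realizing isometries of the intersection form. But you concede in your last sentence that this only yields the closure $\overline{\mathcal{C}}_M$, not $\mathcal{C}_M$ itself; that concession is correct and it is precisely the gap. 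The diffeomorphism orbit of a rational approximating class converges to the target class rather than hitting it, so no argument of this shape proves the strong conjecture. As a proof of the statement actually posed, the proposal therefore fails at the final step, and no repair within these methods is indicated.

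Two further points of substance. First, you cannot inflate directly along a section, which has square $-n<0$; Lemma \ref{inflation} requires non-negative square. The paper's workaround is to smooth the section together with $m$ parallel fibres into a surface $W$ of square $0$ (spin case) or $1$ (non-spin case), and to inflate along the quadruple $F,W,R,T$. Second, you locate the spin/non-spin dichotomy in the image of $\mathrm{Diff}(M)$ in the orthogonal group, claiming a spinor-norm obstruction shrinks the realizable subgroup in the non-spin case. That is not where the obstruction lives in the paper: Proposition \ref{prop existence diffeom} supplies the needed diffeomorphisms of $H(n)^\perp$ uniformly in both cases, and the shearing automorphism $f_i$ of Lemma \ref{diffeomorphism fi} is realized in both cases as well. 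The dichotomy instead comes from the arithmetic of making all four coefficients positive after applying $f_i$: one needs $\alpha\beta+\gamma\delta>0$, which is equivalent to $\omega^2>0$ when $W^2=0$ (spin) but to $\omega^2>(\omega\cdot PD(F))^2$ when $W^2=1$ (non-spin). That is the source of the restriction to $\mathcal{P}^{\pm c_1(M)>}$, not any deficiency of the diffeomorphism group.
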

The conjecture should even hold for any closed 4-manifold underlying a minimal K\"ahler surface, but we only consider the case of elliptic surfaces. There is also a slightly weaker form of the conjecture. We denote by $\overline{\mathcal{C}}_M$ the closure of the symplectic cone in the vector space $H^2(M;\mathbb{R})$.
\begin{conj}[Weak conjecture]\label{main conj}
We have
\begin{equation*}
\left(\mathcal{P}^{c_1(M)}\cup \mathcal{P}^{-c_1(M)}\right)\subset \overline{\mathcal{C}}_M.
\end{equation*}
Hence every class of positive square whose cup product with the first Chern class of $M$ is non-zero is the limit of a sequence of symplectic classes. Equivalently, the symplectic cone $\mathcal{C}_M$ is dense in $\left(\mathcal{P}^{c_1(M)}\cup \mathcal{P}^{-c_1(M)}\right)$.
\end{conj}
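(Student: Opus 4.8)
The plan is to prove the inclusion for the half $\mathcal{P}^{-c_1(M)}=\{\alpha:\alpha^2>0,\ \alpha\cdot F>0\}$ and deduce the other half for free: since $-\omega$ is symplectic and compatible with the orientation whenever $\omega$ is, we have $\mathcal{C}_M=-\mathcal{C}_M$, and $\alpha\mapsto-\alpha$ interchanges $\mathcal{P}^{c_1(M)}$ and $\mathcal{P}^{-c_1(M)}$. Because $b_2^+(E(n))=2n-1\geq 5$, the positive cone $\mathcal{P}$ is connected, so I may treat $\mathcal{P}^{-c_1(M)}$ as one connected region. Throughout I will use three properties of $\overline{\mathcal{C}}_M$: it is closed; it is invariant under the image $\Gamma\subset O(H^2(M;\mathbb{Z}))$ of the orientation-preserving self-diffeomorphisms; and it is stable under \emph{inflation} (McDuff, Li--Usher): if $\omega$ is symplectic and $Z$ is an embedded $\omega$-symplectic surface with $Z\cdot Z\geq 0$, then $[\omega]+t\,PD(Z)$ lies in $\mathcal{C}_M$ for all $t\geq 0$.

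The first and decisive step is to produce, from a single K\"ahler seed $\omega_0$, two independent null directions for inflation. The fibre $F$ has $F\cdot F=0$ and, by \cite{T}, an embedded symplectic representative, so inflation along $F$ adjoins the ray $\mathbb{R}_{\geq 0}\,PD(F)$. The essential extra ingredient in the spin case is a \emph{second} null class meeting the fibre once: choosing a section $S$, with $S\cdot S=-n=-2m$ and $S\cdot F=1$, the class $Z=S+mF$ satisfies $Z\cdot Z=0$ and $Z\cdot F=1$, and a smoothing of $S$ together with $m$ fibres represents $Z$ by an embedded symplectic surface of genus $m$. Inflating successively along $F$ and $Z$ (arranging, as one may, that $Z$ stays symplectic after the inflation supported near $F$), I obtain in $\overline{\mathcal{C}}_M$ every class $[\omega_0]+sF+tZ$ with $s,t\geq 0$. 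Here $F$ and $Z$ span a hyperbolic plane with $F\cdot Z=1$, so the closure of this family already realizes every value of the scale-invariant ratio $r(\alpha)=\alpha\cdot\alpha/(\alpha\cdot F)^2$ in the whole range $(0,\infty)$. This is what makes $Z$ indispensable: every $\phi\in\Gamma$ fixing $F$ preserves $r$, and inflation along $F$ only increases $r$, while the K\"ahler cone and its $\Gamma$-translates have $r$ bounded away from $0$; reaching the part of the cone near the light cone therefore forces the use of a second null direction.

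The remaining step is to spread these boundary directions around with the diffeomorphism group and pass to the closure. For each $\phi\in\Gamma$ fixing $F$, the class $\phi(Z)$ is again isotropic with $\phi(Z)\cdot F=1$, and inflating the transported forms along $\phi(Z)$ lets me approach the boundary ray $\mathbb{R}_{\geq 0}\phi(Z)$. Splitting
\[
H^2(M;\mathbb{Z})=\langle F,F'\rangle\oplus W,\qquad W=2m(-E_8)\oplus(4m-2)H,
\]
with $\langle F,F'\rangle$ a hyperbolic plane and $W$ indefinite of rank $\geq 3$, the stabilizer $\Gamma_F$ acts on the isotropic locus $\{Z'\cdot Z'=0,\ Z'\cdot F=1\}$ through (the realizable part of) the semidirect product of $O(W)$ with the Eichler transvections. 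Any target ray with $\alpha^2>0$ and $\alpha\cdot F>0$ lies between $\mathbb{R}_{\geq 0}F$ and some isotropic ray $\mathbb{R}_{\geq 0}Z'$, so it will lie in $\overline{\mathcal{C}}_M$ provided the $\Gamma_F$-orbit of $[Z]$ is dense among the isotropic rays with $\cdot\,F>0$.

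The hard part will be exactly this density statement, together with the input that enough of $O(H^2)_F$ is genuinely induced by self-diffeomorphisms of $E(2m)$. For the density I would argue lattice-theoretically: for the indefinite form $W$ of rank $\geq 3$ the integral isometry group has only finitely many orbits on primitive vectors of each fixed square, hence its orbits are dense among the rational points of the isotropic quadric, which are in turn dense in the real quadric; transporting this through $\langle F,F'\rangle\oplus W$ gives the density of the $\Gamma_F$-orbit of $[Z]$. The realizability input is where the spin hypothesis enters, since for $E(2m)$ the $(-E_8)$ and $H$ summands carry the $(-2)$-classes and vanishing cycles whose reflections and monodromies are realized by diffeomorphisms, so $\Gamma_F$ is large enough. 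In the non-spin case $E(2m+1)$ the intersection form is odd, so the Wu relation gives $Z'\cdot Z'\equiv Z'\cdot F\pmod 2$ for all classes; a square-zero surface meeting the fibre \emph{once} cannot exist, and the natural replacement $2S+nF$ would require the rigid negative section $S$ with multiplicity two and is not represented by an embedded symplectic surface. The second null direction needed to approach the light cone is thus unavailable, and this is the additional obstruction that confines the argument to the spin surfaces $E(2m)$.
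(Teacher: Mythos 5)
Your overall architecture (reduce to $\mathcal{P}^{PD(F)}$, start from one K\"ahler form, inflate along square-zero symplectic surfaces, and spread the result around with orientation-preserving self-diffeomorphisms) is the same as the paper's, and your class $Z=S+mF$ is exactly the surface $W$ used there in the spin case. But the decisive step fails: the density claim you need is false. Every element of the $\Gamma_F$-orbit of $Z$ is an \emph{integral} class $\zeta$ with $\zeta\cdot PD(F)=1$, i.e.\ a point of the affine quadric $\bigl\{x^2=0,\ x\cdot PD(F)=1\bigr\}$. The set of isotropic rays with positive pairing against $PD(F)$ is homeomorphic to that affine quadric (normalize each ray so that its pairing with $PD(F)$ equals $1$), and under this identification the orbit is a subset of the lattice, hence discrete, hence not dense: rational points of the quadric are dense only because one allows $\zeta\cdot PD(F)=d$ arbitrarily large and divides by $d$, which your normalization forbids. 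Concretely, writing $\alpha=PD(\alpha_1F+\beta W)+v$ with $v\in H(n)^\perp_{\mathbb R}$, the classes you can reach are $sF+t\phi(Z)$ with $\phi(Z)=aF+W+v_\phi$, $v_\phi$ integral; matching the $W$-component forces $t\to\beta$ and then matching the $H(n)^\perp$-component forces the integral vectors $v_\phi$ to converge to $v/\beta$, which is impossible unless $v/\beta$ is itself integral. So the closure of your family misses almost every class in $\mathcal{P}^{PD(F)}$ (e.g.\ any class whose $H(n)^\perp$-component is an irrational multiple of $\beta$), and no strengthening of the lattice-theoretic transitivity statements (Eichler, finiteness of orbits) can repair this, since the obstruction is discreteness, not lack of transitivity.

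The paper's proof circumvents exactly this by using \emph{four} simultaneous inflation directions rather than two: besides $F$ and $W$ it produces a rim torus $R$ and a second torus $T$ that are symplectic for the same form and span a hyperbolic summand $H_{RT}$ \emph{inside} $H(n)^\perp$. Inflation then yields all classes $PD(\alpha F+\beta W+\gamma R+\delta T)$ with positive coefficients, so the component in $H(n)^\perp$ carries two continuously tunable parameters $\gamma,\delta$ that are independent of the pairing with $PD(F)$; a self-diffeomorphism (Proposition 3.6, quoted from \cite{H}) then carries $\gamma R+\delta T$ to an arbitrary rational class of the same square in $H(n)^\perp$, and rational classes with two free positive coefficients do approximate every real $\omega'$ with $\omega'^2>0$. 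A further shear diffeomorphism $f_i$ is needed to convert classes with $\alpha$ or $\delta$ negative into positive ones, which is where the spin/non-spin dichotomy and the restriction to $\mathcal{P}^{c_1(M)>}$ enter; your proposal does not address this sign problem at all, though it is moot given the earlier gap. (Your closing observation that no square-zero class can meet $F$ oddly in the non-spin case is correct and consistent with the paper, which there works with $W^2=1$ instead.)
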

In the following we only consider the weak conjecture. 
To state the theorem we want to prove, consider the following definition:
\begin{defn}
We define $\mathcal{P}^{>}\subset\mathcal{P}$ to be the subcone of elements $\omega$ with 
\begin{equation*}
\omega^2>(\omega\cdot PD(F))^2.
\end{equation*}
For a non-zero class $A\in H^2(M;\mathbb{R})$ we set 
\begin{equation*}
\mathcal{P}^{A>}=\mathcal{P}^{>}\cap \mathcal{P}^A.
\end{equation*}
In particular, this applies to $A=PD(F)$ and $A=\pm c_1(M)$. Note that $\mathcal{P}^{A>}\cup \mathcal{P}^{-A>}$ is a subcone of $\mathcal{P}^{A}\cup\mathcal{P}^{-A}$.
\end{defn}
Then we have:
\begin{thm}\label{main thm}
Let $m\geq 2$ be an integer. If $M$ is the spin surface $E(2m)$ then
\begin{equation*}
\left(\mathcal{P}^{c_1(M)}\cup \mathcal{P}^{-c_1(M)}\right)\subset \overline{\mathcal{C}}_M.
\end{equation*}
If $M$ is the non-spin surface $E(2m-1)$ then
\begin{equation*}
\left(\mathcal{P}^{c_1(M)>}\cup \mathcal{P}^{-c_1(M)>}\right)\subset \overline{\mathcal{C}}_M.
\end{equation*}
\end{thm}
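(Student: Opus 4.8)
The plan is to combine two operations that each preserve membership in the closure of the symplectic cone: inflation along the fibre, and the action of self-diffeomorphisms. I begin with two reductions. Since $\omega\mapsto-\omega$ sends an orientation-compatible symplectic form to another one with $c_1(M,-\omega)=-c_1(M,\omega)$, the cone $\mathcal{C}_M$ (and hence $\overline{\mathcal{C}}_M$) is invariant under $\alpha\mapsto-\alpha$; as $\mathcal{P}^{c_1(M)}=-\mathcal{P}^{-c_1(M)}$, it suffices to treat the single half-cone $\mathcal{P}^{-c_1(M)}$. Writing $f=PD(F)$, so that $-c_1(M)=(n-2)f$ with $n-2>0$, this half-cone is $\mathcal{P}^{-c_1(M)}=\{\alpha\in\mathcal{P}:\alpha\cdot f>0\}$, and the non-spin target is $\mathcal{P}^{-c_1(M)>}=\{\alpha\in\mathcal{P}^{-c_1(M)}:\alpha^2>(\alpha\cdot f)^2\}$. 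For the second reduction, note that if $\phi\in\mathrm{Diff}^+(M)$ then $\phi^*$ carries orientation-compatible symplectic forms to orientation-compatible symplectic forms, so $\overline{\mathcal{C}}_M$ is invariant under the image $\Gamma\le\mathrm{Aut}(H^2(M;\mathbb{Z}))$ of the orientation-preserving self-diffeomorphisms. For $E(n)$ with $n\geq 3$ this image is controlled by Seiberg--Witten theory together with the work of Friedman and Morgan on diffeomorphisms of elliptic surfaces: every element of $\Gamma$ fixes $f$ up to sign, and conversely $\Gamma$ realises all integral isometries fixing $\pm f$ that satisfy the relevant orientation and spinor-norm conditions; in particular $\Gamma$ contains the Eichler transvections attached to classes in $f^{\perp}$.

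Next I produce a seed subcone of $\mathcal{C}_M$. The surface $E(n)$ is K\"ahler and the fibre $F$ is a complex, hence symplectic, curve with $F^2=0$, so for any symplectic class $\alpha$ with $\alpha\cdot f>0$ inflation along $F$ (in the sense of McDuff and Lalonde--McDuff) yields $\alpha+tf\in\mathcal{C}_M$ for all $t\geq 0$. The central bookkeeping device is the scale-invariant quantity
\[
\rho(\alpha)=\frac{(\alpha\cdot f)^2}{\alpha^2},\qquad \alpha\in\mathcal{P}^{-c_1(M)},
\]
in terms of which $\mathcal{P}^{-c_1(M)>}=\{\alpha\in\mathcal{P}^{-c_1(M)}:\rho(\alpha)<1\}$, while $\mathcal{P}^{-c_1(M)}$ itself is $\{\rho<\infty\}$. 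Because $f^2=0$, inflation along $F$ leaves $\alpha\cdot f$ unchanged and strictly increases $\alpha^2$, hence strictly \emph{decreases} $\rho$ toward $0$; on the other hand every $\phi\in\Gamma$ preserves $\alpha^2$ and preserves $\alpha\cdot f$ up to sign, hence preserves $\rho$, as does rescaling by a positive constant. Consequently the attainable range of $\rho$ is governed solely by the supremum $\rho^{*}$ of $\rho$ over the available seeds, and all the \emph{directional} freedom within a fixed $\rho$-level must be supplied by $\Gamma$ together with the continuous families from inflation and rescaling.

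The proof then reduces to two statements. First, that the closure of the reachable set equals $\{\alpha\in\mathcal{P}^{-c_1(M)}:\rho(\alpha)<\rho^{*}\}$, which requires showing that, for each fixed value below $\rho^{*}$, the $\Gamma$-orbit of an open piece of the inflated seed cone is dense in the corresponding $\rho$-level. Here I would pass to the indefinite lattice $f^{\perp}/\langle f\rangle$, on which the stabiliser of $f$ in $\Gamma$ acts as an arithmetic group, and exploit the Eichler transvections together with the already-available rescaling and fibre inflation to approximate an arbitrary target direction of the prescribed $\rho$-value. Second, that the seeds realise every $\rho<\rho^{*}$ with $\rho^{*}=\infty$ in the spin case $E(2m)$ and $\rho^{*}=1$ in the non-spin case $E(2m-1)$. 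Combining the two statements with the sign symmetry $\alpha\mapsto-\alpha$ gives $\mathcal{P}^{c_1(M)}\cup\mathcal{P}^{-c_1(M)}\subset\overline{\mathcal{C}}_M$ when $\rho^{*}=\infty$ and $\mathcal{P}^{c_1(M)>}\cup\mathcal{P}^{-c_1(M)>}\subset\overline{\mathcal{C}}_M$ when $\rho^{*}=1$.

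I expect the main obstacle to be the second statement, namely computing $\rho^{*}$ and explaining why it is infinite in the spin case but only $1$ in the non-spin case. Reaching $\rho$-values near (and, in the even case, above) $1$ forces the seeds to be symplectic classes lying close to a null direction $\nu$ with $\nu\cdot f\neq 0$, and the existence of such seeds is constrained by which classes carry embedded symplectic representatives. When $M=E(2m)$ the intersection form is even, there is no parity condition, and the seeds can be pushed arbitrarily close to such null directions, giving $\rho^{*}=\infty$. When $M=E(2m-1)$ the odd intersection form imposes a characteristic-vector condition both on the isometries in $\Gamma$ and on the admissible seeds; this is the additional obstruction, and I expect it to cap the attainable ratio at $\rho^{*}=1$. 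Verifying the density statement for the arithmetic group $\Gamma$ and pinning down this characteristic-class obstruction are the two places where the real work lies.
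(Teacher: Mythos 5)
Your skeleton --- reduce by the sign symmetry to the half-cone $\mathcal{P}^{PD(F)}$, then combine inflation with the image of $\mathrm{Diff}^+(M)$ in $\mathrm{Aut}(Q_M)$ --- is the same as the paper's, and your invariant $\rho(\alpha)=(\alpha\cdot f)^2/\alpha^2$ does correctly encode the spin/non-spin dichotomy ($\rho<\infty$ versus $\rho<1$). But the two statements you defer to the end are not loose ends; they are the theorem, and as you have set things up they cannot be closed. The decisive problem is your computation of $\rho^{*}$. You inflate only along the fibre $F$, and you yourself observe that this strictly \emph{decreases} $\rho$, while $\Gamma$ and rescaling preserve it. So the only possible source of classes with $\rho$ large (spin case) or near $1$ (non-spin case) is the initial seed set, and you give no construction of symplectic classes with $\rho$ unbounded; Kähler classes alone are not known to supply them, and your appeal to ``pushing seeds close to null directions'' is exactly the unproved point. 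The paper's mechanism is different: it inflates along a second surface $W$ (a section smoothed with $m$ copies of the fibre, so $W\cdot F=1$), which \emph{increases} $\omega\cdot PD(F)$ and drives $\rho$ up, together with a rim torus $R$ and a torus $T$ in $F^{\perp}$. The spin/non-spin dichotomy then falls out of the elementary identity $\omega^2=2\alpha\beta+W^2\beta^2+2\gamma\delta$ with $W^2=0$ for $n$ even and $W^2=1$ for $n$ odd: the condition for the relevant isometry $f_i$ ($W\mapsto W+iT$, $R\mapsto R-iF$) to carry a positive, hence inflatable, class onto the target is $\alpha\beta+\gamma\delta>0$, i.e.\ $\omega^2>W^2(\omega\cdot PD(F))^2$. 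Your proposed explanation via a ``characteristic-vector condition'' is not the actual obstruction.

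The density step has a parallel gap. Fibre inflation never changes the component $\omega'$ of a class in $H(n)^{\perp}$, and the stabiliser of $F$ and $W$ in $\Gamma$ preserves $\omega'^2$, so the orbit of your inflated seed set is confined near the $\omega'$-data already present in the seeds; to sweep out a full $\rho$-level you must either invoke the transvections that move $W$ (which reintroduces exactly the coefficient bookkeeping on $H(n)=\langle F,W\rangle$ that your $\rho$-formalism was meant to avoid) or inflate along surfaces representing classes in $H(n)^{\perp}$. The paper does the latter, and replaces your ergodicity-flavoured density claim by an explicit, checkable transitivity statement (every integral class in $H(n)^{\perp}$ is carried by a diffeomorphism to a combination of $R$ and $T$ of the same square and divisibility) plus rational approximation of $\omega'$. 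Note also that you quietly assume that for \emph{any} symplectic class $\alpha$ with $\alpha\cdot f>0$ the fibre is symplectic for some form in the class $\alpha$, which is unjustified for non-Kähler forms; the paper instead performs all inflation from a single form $\omega_0$ for which $F$, $W$, $R$, $T$ are simultaneously symplectic and transports the result by diffeomorphisms. To repair your argument you would need, at minimum, a surface dual to $F$ along which to inflate and an analysis of its self-intersection --- at which point you have reproduced the paper's proof.
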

This proves Conjecture \ref{main conj} in the case of the spin elliptic surfaces $E(2m)$. At the moment we do not know how to prove the full Conjecture \ref{main conj} in the non-spin case. One can view these results as evidence that the strong Conjecture \ref{strong conj} is indeed true. The sequences of symplectic forms in the theorem are all obtained from a single symplectic form by inflation along certain symplectic surfaces and the action of the orientation preserving self-diffeomorphisms of the elliptic surface $M$.

\subsection*{Acknowledgements} I would like to thank Tian-Jun Li for very helpful conversations.

\section{Some notation}

We follow the notation from \cite{H}. In particular, all self-diffeomorphisms of $M$ are orientation preserving. We often denote a symplectic form and its class by the same symbol. Note that considering minus a given symplectic form we see that to prove the weak conjecture it suffices to prove that
\begin{equation*}
\mathcal{P}^{PD(F)}\subset \overline{\mathcal{C}}_M.
\end{equation*}
We want to prove the following theorem, which is equivalent to Theorem \ref{main thm}:
\begin{thm}\label{main thm F}
Let $m\geq 2$ be an integer. If $M$ is the spin surface $E(2m)$ then
\begin{equation*}
\mathcal{P}^{PD(F)}\subset \overline{\mathcal{C}}_M.
\end{equation*}
If $M$ is the non-spin surface $E(2m-1)$ then
\begin{equation*}
\mathcal{P}^{PD(F)>}\subset \overline{\mathcal{C}}_M.
\end{equation*}
\end{thm}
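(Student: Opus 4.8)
The plan is to produce, from a single Kähler form on $M=E(n)$, a large family of symplectic classes by inflation, and then to spread this family across the target cone using the orientation preserving self-diffeomorphisms of $M$. Write $\Gamma$ for the image of $\pi_0\mathrm{Diff}^+(M)$ acting on $H^2(M;\mathbb{R})$. The closure $\overline{\mathcal{C}}_M$ is closed, $\Gamma$-invariant, and symmetric under $\alpha\mapsto-\alpha$ (since $-\omega$ is symplectic whenever $\omega$ is); moreover every element of $\Gamma$ preserves $\pm c_1(M)=\mp(n-2)PD(F)$ and the positive cone, so $\mathcal{P}^{PD(F)}\cup\mathcal{P}^{-PD(F)}$ is $\Gamma$-invariant. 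By the reduction already noted it therefore suffices to exhibit a set $R\subset\mathcal{C}_M$ of symplectic classes whose orbit under $\Gamma$, together with positive rescaling, is dense in $\mathcal{P}^{PD(F)}$ (resp. in $\mathcal{P}^{PD(F)>}$).

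For the inflation step I would fix a Kähler form $\omega_0$ and use two kinds of symplectic surfaces: the fibre $F$, with $F\cdot F=0$, and the curves $C_k=S+kF$ obtained from a section $S$ (with $S\cdot S=-n$ and $S\cdot F=1$) by adding $k$ fibres and smoothing, so that $C_k\cdot C_k=2k-n$ and $C_k\cdot F=1$. By the inflation theorem of Lalonde--McDuff, inflating $\omega_0$ along a symplectic surface $C$ with $C\cdot C\ge 0$ produces symplectic classes $\omega_0+t\,PD(C)$ for all $t\ge 0$. Hence, whenever $C_k\cdot C_k\ge 0$, the whole shifted cone $\omega_0+\mathbb{R}_{\ge 0}\langle PD(F),PD(C_k)\rangle$ lies in $\mathcal{C}_M$.

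I would then read off the reachable region in the plane spanned by $PD(F)$ and $PD(S)$. Writing $\alpha=p\,PD(F)+q\,PD(S)$ one computes $\alpha\cdot\alpha=q(2p-nq)$ and $\alpha\cdot PD(F)=q$, so $\mathcal{P}^{PD(F)}$ is cut out by $q>0,\ p>\tfrac{n}{2}q$ and $\mathcal{P}^{PD(F)>}$ by $q>0,\ p>\tfrac{n+1}{2}q$. The inflating curve $C_k$ points in the direction $(p,q)=(k,1)$, and the smallest admissible slope $k$ with $C_k\cdot C_k\ge 0$ governs how far toward the boundary ray $p=\tfrac{n}{2}q$ the inflation cone reaches. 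This is exactly where the spin/non-spin dichotomy enters: for $n=2m$ the curve $C_m=S+mF$ has $C_m\cdot C_m=0$ and slope $\tfrac{n}{2}$, so inflation together with rescaling and passage to the closure fills out all of $\mathcal{P}^{PD(F)}$ in this plane; for $n=2m-1$ the parity of $2k-n$ forbids a square-zero curve $S+kF$, the best admissible slope being $k=\tfrac{n+1}{2}$ with $C_k\cdot C_k=1$, and only $\mathcal{P}^{PD(F)>}$ is reached.

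Finally I would use $\Gamma$ to pass from this planar family to the full cone. By the realisation results of \cite{H}, the stabiliser $\Gamma_F\subset\Gamma$ of $F$ is large: it contains isometries of $H^2(M;\mathbb{Z})$ fixing $F$ coming from self-diffeomorphisms, and by Eichler/Wall-type transitivity for indefinite unimodular lattices these act transitively on primitive integral classes of fixed square and fixed intersection with $F$ (subject only to the relevant congruence conditions). Approximating an arbitrary real class of the target cone by rational classes and clearing denominators, one moves each such class by an element of $\Gamma_F$ into the plane $\langle PD(F),PD(S)\rangle$, where the previous two paragraphs apply; taking the closure then yields the two inclusions. I expect the main difficulty to be matching the arithmetic transitivity statement precisely to the realisation results for $\mathrm{Diff}^+(E(n))$ — in particular tracking the divisibility and congruence data of classes and ensuring that the boundary rays, where $\alpha\cdot\alpha\to 0$, are genuinely approached — together with isolating the non-spin parity obstruction as exactly the gap between $\mathcal{P}^{PD(F)>}$ and $\mathcal{P}^{PD(F)}$, which is precisely the point at which the strong conjecture remains open.
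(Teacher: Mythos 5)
Your overall strategy---inflate along $F$ and along $W=S+mF$, locate the spin/non-spin dichotomy in the sign of $(S+mF)^2$, then spread the resulting planar family over the cone by self-diffeomorphisms---matches the paper's in spirit, and your computation of the reachable subcone of the plane $\langle PD(F),PD(S)\rangle$ and of the inequality separating $\mathcal{P}^{PD(F)}$ from $\mathcal{P}^{PD(F)>}$ is exactly the arithmetic that appears in the paper. The gap is in the last step: you cannot, in general, move a class of $\mathcal{P}^{PD(F)}$ into the plane $\langle PD(F),PD(S)\rangle$ by a diffeomorphism fixing $F$. Such a diffeomorphism preserves $\omega^2$ and $\omega\cdot PD(F)$, and the unique point of that plane with invariants $(\omega^2,\omega\cdot PD(F))=(c,q)$ is $p\,PD(F)+q\,PD(S)$ with $p=(c+nq^2)/(2q)$, which is generically not integral (in $E(4)$ the primitive class $F+2W+R+T$ has $c=6$, $q=2$, forcing $p=11/2$); rescaling and re-approximating does not remove the congruence condition $2q\mid c+nq^2$ nor the need to match divisibilities, because the rank-two lattice $\langle F,S\rangle$ simply does not represent every pair (square, $F$-pairing) by a class of prescribed divisibility. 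Eichler/Wall transitivity only identifies orbits of primitive vectors of given square, divisibility and type; it does not produce a representative inside a sublattice that contains no such vector. The fallback reading of your plan---that the $\Gamma$-orbit of the two-dimensional inflation cone is dense in the full $b_2$-dimensional cone---is a nontrivial density statement about integral points on quadrics that you assert but do not prove.

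The paper closes exactly this gap by enlarging the target sublattice. It fixes a second hyperbolic summand $H_{RT}$ spanned by a rim torus $R$ and a torus $T$, arranges (via a Lagrangian-to-symplectic deformation) that $F,W,R,T$ are simultaneously symplectic, and maps the component of $\omega$ in $H(n)^\perp$ into $H_{RT}$ using Proposition \ref{prop existence diffeom}; this is possible precisely because the hyperbolic plane represents every even integer primitively, unlike $\langle F,S\rangle$. The price is that the resulting coefficient of $F$ or $T$ may be negative, and the second new ingredient is the shear diffeomorphism $f_i$ of Lemma \ref{diffeomorphism fi} ($W\mapsto W+iT$, $R\mapsto R-iF$), which trades coefficients between the $F$- and $T$-directions until all four coefficients of $PD(\alpha F+\beta W+\gamma R+\delta T)$ are positive; the solvability condition for the integer $i$ is $\alpha\beta+\gamma\delta>0$, which is your inequality $\omega^2>0$ in the spin case and $\omega^2>(\omega\cdot PD(F))^2$ in the non-spin case. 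Without a substitute for $H_{RT}$ and $f_i$ (or a proof of the density of the orbit of the planar cone), your argument does not close.
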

We will first prove a special case of Theorem \ref{main thm F} since this is easier and uses the same method as in the general case. We need some notation. Consider the manifold $M=E(n)$ with $n\geq 3$ and define an integer $m$ by $n=2m$ if $n$ is even and $n=2m-1$ if $n$ is odd.
\begin{defn}
Let $W$ be the embedded surface obtained by smoothing the intersections of a section $V$ of the elliptic surface $M$ of square $-n$ and $m$ parallel copies of the fibre $F$. Let $R$ denote a rim torus of square zero and $S$ a dual vanishing sphere of square $-2$ in $M$. Both $F,W$ and $R,S$ intersect in a single transverse positive point. Otherwise the surfaces are disjoint.
\end{defn}
The vanishing sphere $S$ is obtained by sewing together in the fibre sum 
\begin{equation*}
E(n)=E(1)\#_{F=F}E(n-1)
\end{equation*}
two vanishing disks coming from singular fibres with the same vanishing cycles. The surface $W$ has self-intersection zero if $n$ is even and one if $n$ is odd. The surfaces $F$ and $W$ span a copy of the standard hyperbolic form $H$ in the intersection form if $n$ is even and a copy of $H'$ if $n$ is odd, where 
\begin{equation*}
H'=\left(\begin{array}{cc} 0 & 1 \\ 1 & 1 \end{array}\right).
\end{equation*}
We will denote both intersection forms by $H(n)$. Since this form is unimodular the total intersection form over the integers looks like 
\begin{equation*}
Q_M=H(n)\oplus H(n)^\perp.
\end{equation*}
We can decompose any class $\omega \in H^2(M;\mathbb{R})$ according to this splitting as 
\begin{equation*}
\omega=PD(\alpha F+\beta W) +\omega'
\end{equation*}
with $\omega'\in H(n)^\perp$ (here we mean the real subspace spanned by this lattice). Note that 
\begin{equation*}
\beta=\omega\cdot PD(F).
\end{equation*}

\begin{defn}
We define 
\begin{equation*}
\mathcal{P}^{PD(F)+}\subset\mathcal{P}^{PD(F)}
\end{equation*}
to be the subset of elements of the form $\omega=PD(\alpha F+\beta W) +\omega'$ where $\alpha,\beta$ and $\omega'^2$ are positive. We call the classes in this subset {\em positive}.
\end{defn}
\begin{thm}\label{thm PD(F)+}
We have
\begin{equation*}
\mathcal{P}^{PD(F)+}\subset \overline{\mathcal{C}}_M.
\end{equation*}
\end{thm}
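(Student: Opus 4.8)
\emph{The plan is to} produce the required symplectic classes from a single K\"ahler form by inflation, and then to spread their $H(n)^\perp$-component across a dense subset of directions using orientation preserving self-diffeomorphisms of $M$. First I would fix a K\"ahler form $\omega_0$ on $M=E(n)$ and write $[\omega_0]=PD(\alpha_0 F+\beta_0 W)+\omega_0'$ with $\alpha_0,\beta_0>0$, since an ample class pairs positively with the fibre $F$ and with the effective surface $W$. The fibre $F$ and the surface $W$ may be taken $\omega_0$-symplectic (they are complex curves), with $F^2=0$, $W^2\ge 0$ and $F\cdot W=1>0$, so the Thurston--McDuff inflation construction along $F$ and along $W$ yields symplectic forms in the classes $PD(\alpha F+\beta W)+\omega_0'$ for all sufficiently large $\alpha,\beta$. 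Because $\mathcal{C}_M$ is a cone and $\overline{\mathcal{C}}_M$ is closed, the whole closed cone generated by this two-parameter family already lies in $\overline{\mathcal{C}}_M$.

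Next I would exploit that $\overline{\mathcal{C}}_M$ is invariant under every orientation preserving self-diffeomorphism $\phi$ of $M$, as $\phi^*$ carries symplectic classes to symplectic classes. Following the construction of such diffeomorphisms in \cite{H} from the rim tori $R$ and the vanishing spheres $S$, the realizable isometries include the Eichler--Siegel transvections $E(F,\xi)$ associated to the isotropic class $PD(F)$, with $\xi$ ranging over a finite-index sublattice of $H(n)^\perp$. A direct computation on the seed class gives
\[
E(F,\xi)\big(PD(\alpha F+\beta W)+\omega_0'\big)=PD\big((\alpha-\tfrac12\beta\,\xi^2-\xi\cdot\omega_0')F+\beta W\big)+(\omega_0'+\beta\,\xi),
\]
so the transvection leaves the fibre pairing $\beta=\omega\cdot PD(F)$ fixed while translating the $H(n)^\perp$-component by $\beta\,\xi$. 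Letting $\xi$ run through the sublattice, $\beta$ through $[\beta_0,\infty)$, and using the overall scaling of the cone, the attainable $H(n)^\perp$-components $\omega_0'+\beta\,\xi$ become dense in $H(n)^\perp\otimes\mathbb{R}$, while the $F$-coefficient can simultaneously be made positive by first inflating $\alpha$ large along $F$.

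To assemble these, given a positive target $\omega=PD(\alpha F+\beta W)+\omega'$ with $\alpha,\beta>0$ and $(\omega')^2>0$, I would first rescale so that the $H(n)^\perp$-part is large, approximate its direction by a lattice class $\xi$, choose $\beta$ and the transvection so that $\omega_0'+\beta\,\xi$ matches $\omega'$, correct the $F$-coefficient by inflation along $F$, and finally rescale back; passing to the closure then places $\omega$ in $\overline{\mathcal{C}}_M$, which is exactly the assertion $\mathcal{P}^{PD(F)+}\subset\overline{\mathcal{C}}_M$. The \emph{main obstacle} is the input from \cite{H}: one must know that the transvections $E(F,\xi)$, together with enough reflections in $(-2)$-classes, are genuinely realized by orientation preserving self-diffeomorphisms of $E(n)$, which is precisely where the surfaces $R$ and $S$ enter and where the parity of $n$ will eventually matter. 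A secondary technical point is to guarantee that $F$ and $W$ are simultaneously symplectic for the starting form so that the inflations of the first step are legitimate; the hypotheses defining $\mathcal{P}^{PD(F)+}$, namely that $\alpha$, $\beta$ and $(\omega')^2$ are all strictly positive, keep us in the interior region where these constructions do not meet the boundary obstruction responsible for the weaker statement in the non-spin case.
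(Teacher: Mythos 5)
There is a genuine gap in the density step. Your only mechanism for changing the $H(n)^\perp$-component is the transvection $E(F,\xi)$, which sends $\omega_0'$ to $\omega_0'+\beta\,\xi$ with $\xi$ in a lattice $\Lambda$. The fibre pairing $\beta=\omega\cdot PD(F)$ is prescribed by the target class, and the overall scaling by $\lambda$ acts on the target and on $\omega_0'$ simultaneously: after rescaling, the attainable $H(n)^\perp$-components with fibre pairing $\beta$ are of the form $\tfrac{1}{\lambda}\omega_0'+\beta\,\xi$, i.e.\ they lie in the Minkowski sum of the discrete set $\beta\Lambda$ with the fixed curve $\{\tfrac{1}{\lambda}\omega_0'\}$. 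This is a countable union of arcs, nowhere dense in $H(n)^\perp\otimes\mathbb{R}$ (which has rank $12n-4$), so a generic $\omega'$ is not approximated. Inflation along $F$ and $W$ cannot repair this, since it never changes the $H(n)^\perp$-component at all. Mentioning ``enough reflections in $(-2)$-classes'' does not close the gap as written, because the orbit of the single real class $\omega_0'$ under the (countable) realized automorphism group is constrained to the quadric $\{x^2=\omega_0'^2\}$ and you give no argument that it, or its cone of rescalings, is dense where needed.

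The missing ingredient is a way to reach a \emph{full-dimensional} family of $H(n)^\perp$-directions by inflation, and this is where the rim torus $R$ and the torus $T$ (obtained by smoothing $R\cup S$) are essential. The paper first makes $F,W,R,T$ simultaneously symplectic for one form $\omega_0$ (Proposition \ref{prop symp surfaces}, via the Lagrangian-to-symplectic deformation), so that the generalized inflation Lemma \ref{inflation} produces symplectic classes $PD(\alpha F+\beta W+\gamma R+\delta T)$ plus an arbitrarily small remainder, for \emph{all} positive $\alpha,\beta,\gamma,\delta$. It then runs the diffeomorphisms in the opposite direction from yours: a rational approximation $\omega_k'$ of the target component $\omega'$, with $\omega_k'^2>0$ (this is exactly where positivity of $\omega'^2$ enters), is pulled back by Proposition \ref{prop existence diffeom} to a positive rational combination $PD(\gamma_k R+\delta_k T)$, which the inflated family does reach. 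In other words, you should move the target into the $(R,T)$-plane rather than trying to translate $\omega_0'$ toward the target. Your transvections $E(F,\xi)$ are genuinely realized (your $E(F,iT)$ is the paper's Lemma \ref{diffeomorphism fi}), but in the paper they are needed only later, for the non-positive classes in Theorem \ref{main thm F}, not for Theorem \ref{thm PD(F)+}.
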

This theorem describes the first subset of $\mathcal{P}^{PD(F)}$ that we can represent by the limits of symplectic forms. We will extend it later and prove Theorem \ref{main thm F}.

\section{Symplectic forms and diffeomorphisms}
The inflation procedure, introduced by Lalonde and McDuff \cite{LaMc,McD}, shows that if $\Sigma$ is a closed connected symplectic surface of non-negative square in a closed symplectic 4-manifold $(Y,\omega)$, then the class $[\omega]+tPD(\Sigma)$ is represented by a symplectic form for all $t\geq 0$. We need the following generalized inflation lemma:
\begin{lem}\label{inflation}
Let $(Y,\omega)$ be a closed symplectic 4-manifold and $\Sigma_1,\Sigma_2\subset Y$ closed connected symplectic surfaces of non-negative square which intersect transversely in a single positive point. Then for all real numbers $r_1,r_2\geq 0$ the class
\begin{equation*}
[\omega]+r_1PD(\Sigma_1)+r_2PD(\Sigma_2)
\end{equation*}
is represented by a symplectic form.
\end{lem}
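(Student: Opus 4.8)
The plan is to reduce the two-surface inflation lemma to the classical single-surface inflation of Lalonde--McDuff by using the hypothesis that $\Sigma_1$ and $\Sigma_2$ meet in a single positive transverse point. The key geometric idea is to \emph{smooth} the transverse intersection: given the two symplectic surfaces meeting positively at one point, one can symplectically resolve the intersection to produce a single connected symplectic surface $\Sigma$ whose homology class is $[\Sigma_1]+[\Sigma_2]$. Because the intersection point is positive and both pieces are symplectic, the symplectic resolution (the symplectic sum/smoothing at a point, as in the local model of Gompf or the McDuff--Salamon treatment) can be carried out so that the result is again a closed connected symplectic surface, and its square is $[\Sigma_1]^2+2\,[\Sigma_1]\cdot[\Sigma_2]+[\Sigma_2]^2=[\Sigma_1]^2+[\Sigma_2]^2+2\geq 0$, so single-surface inflation applies to it.

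First I would set up the local symplectic normal form near the intersection point, where $\Sigma_1$ and $\Sigma_2$ look like two symplectic coordinate planes in $(\mathbb{R}^4,\omega_{\mathrm{std}})$ meeting positively at the origin, and replace their union near the origin by the standard symplectic annular neck, obtaining a connected symplectic surface $\Sigma$ representing $PD(\Sigma_1)+PD(\Sigma_2)$ in homology. Next I would apply single-surface inflation to $\Sigma$ to conclude that $[\omega]+t\,PD(\Sigma)=[\omega]+t\,PD(\Sigma_1)+t\,PD(\Sigma_2)$ is symplectic for all $t\ge 0$. This already handles the diagonal direction $r_1=r_2$. To reach arbitrary $r_1,r_2\ge 0$, I would inflate in two stages: first inflate along $\Sigma_2$ alone (which has nonnegative square, so classical inflation gives $[\omega]+s\,PD(\Sigma_2)$ symplectic for all $s\ge 0$), and then, within the new symplectic form, inflate along a symplectic surface in the class of $\Sigma_1$. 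The point is that after the first inflation $\Sigma_1$ survives as a symplectic surface of the same (nonnegative) square, so a further inflation is permitted, yielding $[\omega]+r_1\,PD(\Sigma_1)+r_2\,PD(\Sigma_2)$ for all $r_1,r_2\ge 0$.

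The main obstacle I anticipate is the second, iterated inflation step: after inflating along $\Sigma_2$ the ambient symplectic form changes, and one must ensure that the class $PD(\Sigma_1)$ is still represented by an \emph{embedded symplectic} surface of nonnegative square with respect to the \emph{new} form, since inflation requires the surface to be symplectic for the form being inflated. Because $\Sigma_1$ and $\Sigma_2$ meet transversely and positively and remain symplectic, one expects $\Sigma_1$ to stay symplectic after a suitable choice of inflation supported in a neighborhood of $\Sigma_2$ (the inflation construction can be arranged to be $C^0$-small away from a tubular neighborhood of $\Sigma_2$, and to keep $\Sigma_1$ symplectic where the two cross), but verifying this compatibility carefully is the delicate part. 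An alternative that sidesteps this difficulty, and which I would fall back on, is to smooth not just once but to build, for each target ratio $r_1:r_2$, a single symplectic surface whose class is a positive combination of $[\Sigma_1]$ and $[\Sigma_2]$ by taking appropriately many parallel copies and smoothing all intersections; inflation along this single surface then reaches a dense set of rays in the positive quadrant, and the general case follows by the openness of the symplectic cone inside the positive cone.
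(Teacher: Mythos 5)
Your primary argument --- two-stage inflation, noting that the surface not currently being inflated survives as a symplectic surface because the inflation is supported in a tubular neighbourhood of the other one --- is exactly the paper's proof, and the delicate point you flag is settled just as you expect: by Gompf's Lemma 2.3 one arranges that $\Sigma_1$ meets $\nu\Sigma_2$ in a disk fibre of the symplectic tubular neighbourhood, and the inflated form keeps those fibres symplectic. (Your fallback via smoothed parallel copies and density of rational rays would only place the class in the \emph{closure} of the symplectic cone, since an open set containing a dense set need not be everything, but you do not need it.)
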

\begin{proof} By the symplectic neighbourhood theorem $\Sigma_1$ has a tubular neighbourhood $\nu \Sigma_1$ with symplectic fibres. According to Lemma 2.3 in \cite{G} we can assume that $\Sigma_2$ intersects $\nu \Sigma_1$ in one of the disk fibres. If we first do inflation along $\Sigma_1$ as in \cite[Lemma 3.7]{McD} then the symplectic form changes only in the tubular neighbourhood $\nu \Sigma_1$ and the fibres stay symplectic. Hence $\Sigma_2$ remains symplectic and we can then do inflation along $\Sigma_2$. Compare with \cite[Lemma 2.1.A]{Bi} and \cite[Theorem 2.3]{LiU}.
\end{proof}

\begin{prop}\label{prop symp surfaces}
There exists a symplectic form on $M$ such that $F,W,R$ and $S$ are symplectic surfaces.
\end{prop}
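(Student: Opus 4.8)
The plan is to construct the form in two stages, exploiting that the two pairs $\{F,W\}$ and $\{R,S\}$ are disjoint. Since $M=E(n)$ is a projective, hence K\"ahler, surface I would begin with a K\"ahler form $\omega_K$. The fibre $F$ is a holomorphic elliptic curve and the section $V$ is a holomorphic rational curve of square $-n$, so both are $\omega_K$-symplectic; the surface $W$ is by construction a smoothing of $V$ together with $m$ parallel fibres, and since these meet positively and transversely the positive intersections can be resolved symplectically, yielding an embedded symplectic surface in the class $[V]+m[F]$ of square $-n+2m\in\{0,1\}$. Thus $F$ and $W$ are simultaneously symplectic for the K\"ahler form $\omega_K$.

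The genuine difficulty is $R$ and $S$. For any form adapted to the elliptic fibration the rim torus $R$ and the vanishing sphere $S$ are Lagrangian: the vanishing cycles bound Lagrangian thimbles, so the sphere $S$ assembled from two such disks, as well as the torus $R$, carries zero symplectic area. In particular $\int_R\omega_K=\int_S\omega_K=0$, so no cohomologically trivial perturbation of $\omega_K$ can make $R$ and $S$ symplectic; the construction is forced to change the cohomology class, and it must do so in a neighbourhood of $R\cup S$ only, in order to leave $F$ and $W$ undisturbed.

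Concretely I would fix a tubular neighbourhood $U=\nu(R\cup S)$ disjoint from $F\cup W$ (possible since all four surfaces are pairwise disjoint apart from the two prescribed intersection points inside each pair) and seek a closed $2$-form $\eta$ supported in $U$ whose class is a positive combination $a\,PD(R)+b\,PD(S)$. From $R^2=0$, $S^2=-2$ and $R\cdot S=1$ one computes $\int_R\eta=b$ and $\int_S\eta=a-2b$, so taking $a>2b>0$ makes $\eta$ pair positively with both surfaces; this algebraic computation is exactly what guarantees that a suitable class exists. The form $\eta$ itself would be extracted from a symplectic model of the plumbing of a square-$0$ torus and a square-$(-2)$ sphere meeting in one positive point, in which both core surfaces are symplectic, cut off so as to agree with $\omega_K$ near $\partial U$. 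Then $\omega=\omega_K+\eta$ equals $\omega_K$ outside $U$, so $F$ and $W$ stay symplectic, while $R$ and $S$ become symplectic inside $U$.

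The main obstacle is this last gluing: one must verify that $\omega=\omega_K+\eta$ is everywhere nondegenerate across the interpolation region, where the plumbing model is matched to $\omega_K$. Because $R$ and $S$ are Lagrangian for $\omega_K$, the two forms represent different classes on $U$ and can be reconciled only in a soft sense near $\partial U$, so the nondegeneracy cannot be obtained from a small-perturbation argument and instead requires an explicit compatible symplectic plumbing form together with control of the transition on $\partial U$. Once such a form is in hand, the later steps (Theorem~\ref{thm PD(F)+} and beyond) can move its class around by the generalized inflation of Lemma~\ref{inflation} along $F,W,R$ and $S$.
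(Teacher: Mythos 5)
Your setup is right and matches the paper's: a form from the K\"ahler/Gompf construction makes $F$, $V$ and hence $W$ symplectic, and $R$ and $S$ are Lagrangian for it, so the class of the form must change. But your execution has a genuine gap exactly where you flag ``the main obstacle'': you propose to add a closed form $\eta$ supported in a neighbourhood $U$ of $R\cup S$, modelled on an explicit symplectic plumbing of a square-$0$ torus and a square-$(-2)$ sphere, and you never establish nondegeneracy of $\omega_K+\eta$ across the transition region near $\partial U$. That gluing is not a routine matter: the plumbing form and $\omega_K$ differ by a non-exact class on $U$, and interpolating between two symplectic forms in a fixed collar with no freedom to rescale is precisely the kind of step that can fail. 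As written, the proof stops at the hard point.

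The missing idea --- and the reason the paper's argument is short --- is that you do \emph{not} need a large, localized modification; you need a \emph{small global} one, and your assertion that ``the nondegeneracy cannot be obtained from a small-perturbation argument'' is where you go wrong. Cohomology only forces the class to change; it does not force the form to change by a definite amount. The paper (following Gompf's Lemma 1.6) picks a closed $2$-form $\eta$ with $\int_R\eta>0$, $\int_S\eta>0$ (possible since $[R],[S]$ are independent), corrects it by exact forms $d\overline{\alpha}_i$ supported in tubular neighbourhoods so that its pullback to each of $R$ and $S$ is pointwise a positive volume form (the only delicate point being the intersection $R\cap S$, handled by making $R$ meet $\nu S$ in a disk fibre so that the correction term for $S$ pulls back to zero on $R$), and then takes $\omega'=\omega+t\eta'$ for small $t>0$. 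Nondegeneracy of $\omega'$ and symplecticity of the compact surfaces $F$ and $W$ are then automatic by openness, while $j^*\omega'=t\omega_i$ is a volume form on $R$ and $S$ by construction. So your approach is not just different but strictly harder than necessary, and the step you leave open is the whole content of the proposition.
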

From the Gompf sum construction \cite{G} applied to the fibre sum 
\begin{equation*}
E(n)=E(1)\#_{F=F}E(n-1)
\end{equation*}
it is clear that there exists a symplectic form on $M$ such that $F$ and $V$ are symplectic. Hence the surface $W$ is also symplectic. 
\begin{lem}\label{lem R S Lag}
We can choose the surfaces $R$ and $S$ such that they are Lagrangian for a symplectic form from the Gompf construction.
\end{lem}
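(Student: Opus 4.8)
The plan is to exhibit explicit local normal forms for the symplectic form near the fibre $F$ in the neck of the fibre sum and near the matched singular fibres, and to check that $R$ and $S$ are isotropic in these models. The two surfaces are naturally localized in different parts of the construction—$R$ in the gluing neck and $S$ around the sewn singular fibres—so the strategy is to normalize the Gompf form near each and verify the Lagrangian condition by a short coordinate computation.

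For the rim torus I would first invoke the symplectic neighbourhood theorem to put the Gompf form into standard shape near the central fibre $F=T^2$ in the gluing region: in coordinates $(x,y)$ on $F$ and polar coordinates $(r,\theta)$ on the normal disk one has $\omega=dx\wedge dy+r\,dr\wedge d\theta$. The rim torus has the form $R=\gamma\times\{r=r_0\}$, where $\gamma$ is a suitable embedded loop in $F$ and $\{r=r_0\}$ is the normal circle of fixed radius parametrized by $\theta$. Its tangent space is spanned by $\partial_x$ (along $\gamma$) and $\partial_\theta$, and since both $dx\wedge dy$ and $r\,dr\wedge d\theta$ vanish on the pair $(\partial_x,\partial_\theta)$, the form $\omega$ restricts to zero on $R$. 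Hence $R$ is Lagrangian, consistent with $R\cdot R=-\chi(T^2)=0$.

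For the vanishing sphere I would use that the fibre sum $E(n)=E(1)\#_{F=F}E(n-1)$ matches two singular fibres carrying the same vanishing cycle $\delta\subset F$. In the standard symplectic local model of a nodal (Lefschetz) fibre, $\delta$ bounds a Lagrangian thimble, a Lagrangian disk in the total space with boundary $\delta$. Choosing the Gompf gluing so that it identifies the two copies of $\delta$—which is exactly the data that produces $S$ by sewing the two vanishing disks—the two Lagrangian thimbles join along $\delta$ to form a smooth embedded Lagrangian sphere $S$, with $S\cdot S=-\chi(S^2)=-2$, as required. Taking $\gamma=\delta$ in the rim torus construction then arranges that $R$ and $S$ meet in the single transverse positive point where $\gamma$ crosses a thimble, while keeping both disjoint from $F$ and $W$ away from the prescribed intersections.

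The main obstacle is to carry out both local normalizations with respect to a single symplectic form coming from the Gompf construction, rather than two unrelated forms. Concretely, one must check that the symplectic neighbourhood form used for $R$ and the local Lefschetz models used for the two thimbles are compatible across the neck, so that after gluing the thimbles genuinely close up smoothly and Lagrangianly into $S$. This amounts to verifying that the Gompf form is, near the neck and the matched singular fibres, fibred in a way compatible with the vanishing-cycle identification; the symplectic neighbourhood theorem together with the freedom in the gluing diffeomorphism provides exactly the flexibility needed, but keeping track of the matching of the two thimbles and of their common boundary $\delta$ is the delicate point.
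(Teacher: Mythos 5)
Your argument follows essentially the same route as the paper. For the rim torus the paper's proof is exactly your first computation: in the gluing region the Gompf form is the standard product form on $F\times D^2$, and $R=\gamma\times\partial D^2$ is visibly Lagrangian there. For the vanishing sphere the paper does not carry out the construction but cites Section~8 of Auroux--Mu\~noz--Presas, whose content is precisely what you sketch: two Lagrangian thimbles over matched vanishing cycles are sewn along their common boundary $\delta$ into an embedded Lagrangian sphere of square $-2$. The ``delicate point'' you identify --- making the local Lefschetz models and the neck normalization compatible with a single Gompf form --- is real and is exactly what that reference resolves; you flag it correctly but do not close it, which puts your write-up on the same footing as the paper's (which delegates it to the citation). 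Your sanity checks $[L]^2=-\chi(L)$ for both surfaces are correct.

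One concrete slip: you propose taking $\gamma=\delta$ in the rim torus construction and claim this makes $R$ and $S$ meet in a single transverse positive point. It does not. The sphere $S$ meets the neck region $F\times A$ in $\delta\times\rho$ with $\rho$ a radial arc, while $R=\gamma\times c$ with $c$ a concentric circle, so $R\cap S\cong(\gamma\cap\delta)\times(\rho\cap c)$ and the intersection number is $\gamma\cdot\delta$ computed in the fibre. With $\gamma=\delta$ this is $0$ (and the set-theoretic intersection is a whole circle, not transverse). To obtain the configuration required by the paper's definition of $R$ and $S$, namely $R\cdot S=1$ with a single transverse positive intersection, you must take $\gamma$ to be the circle factor of $F=S^1\times S^1$ \emph{dual} to the vanishing cycle $\delta$. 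This does not affect the Lagrangian verification, which uses only that $\gamma$ is an embedded circle in the fibre, but it does matter for the lemma as stated, since the surfaces there are required to realize that intersection pattern.
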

\begin{proof}
The claim is clear for the rim torus $R$: In the fibre sum construction it is given by $R=\gamma\times \partial D^2$ where $\gamma$ is one of the circle factors of the torus $F=S^1\times S^1$ in a tubular neighbourhood $F\times D^2$ on which the symplectic form is a standard product form. The claim for the vanishing sphere $S$ follows from section 8 in \cite{AMP}.
\end{proof}
Hence Proposition \ref{prop symp surfaces} is a consequence of the following theorem that we formulate in a more general way. The proof is very similar to Lemma 1.6 in \cite{G} due to Gompf which states the same for disjoint Lagrangians.
\begin{thm}
Let $(X,\omega)$ be a closed symplectic 4-manifold and $L_1,\ldots,L_n$ closed connected embedded oriented Lagrangian surfaces in $X$ which intersect each other transversely so that at most two surfaces intersect in any given point of $X$. Suppose that the classes of these surfaces are linearly independent in $H_2(X;\mathbb{R})$. Then there exists a symplectic structure $\omega'$ on $X$, deformation equivalent to $\omega$, such that all of these Lagrangian surfaces become symplectic. We can choose the symplectic structure $\omega'$ such that the induced volume forms on the Lagrangians have any given sign. We can also assume that any symplectic surface disjoint from the Lagrangians stays symplectic.
\end{thm}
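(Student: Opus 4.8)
The plan is to reduce the statement to the construction of a single closed $2$-form and then perturb $\omega$ by a small multiple of it. Concretely, I would first observe that it suffices to produce a closed $2$-form $\eta$ on $X$ whose restriction $\eta|_{L_i}$ is a nowhere-vanishing area form on each $L_i$, with any prescribed sign. Granting this, set $\omega'=\omega+t\eta$ for a small $t>0$. Since non-degeneracy is an open condition and $X$ is compact, $\omega'$ is symplectic for $t$ small, and the linear path $\omega+st\eta$, $s\in[0,1]$, is a deformation from $\omega$ to $\omega'$ through symplectic forms. Because each $L_i$ is Lagrangian we have $\omega|_{L_i}=0$, so $\omega'|_{L_i}=t\,\eta|_{L_i}$ is an area form whose sign is that of $\eta|_{L_i}$; thus every $L_i$ becomes symplectic with the chosen orientation. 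Finally, for a given symplectic surface $\Sigma$ disjoint from the $L_i$, the restriction $\omega'|_\Sigma$ is $C^0$-close to the area form $\omega|_\Sigma$ and hence stays non-degenerate once $t$ is small enough.

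To build $\eta$, I would use the hypothesis that the classes $[L_1],\dots,[L_n]$ are linearly independent in $H_2(X;\mathbb{R})$. By the de Rham theorem the pairing between $H^2_{\mathrm{dR}}(X)$ and $H_2(X;\mathbb{R})$ is non-degenerate, so the period map $[\alpha]\mapsto(\int_{L_1}\alpha,\dots,\int_{L_n}\alpha)$ is onto. Hence there is a closed form $\eta_0$ with $\int_{L_i}\eta_0=c_i$ for any prescribed reals $c_i$, which I would choose to have the sign requested for $L_i$. On each closed oriented surface $L_i$ pick an area form $\sigma_i$ of that sign with $\int_{L_i}\sigma_i=c_i$. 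Since $H^2_{\mathrm{dR}}(L_i)\cong\mathbb{R}$ via integration and $\sigma_i$ and $\eta_0|_{L_i}$ have equal integrals, their difference is exact: $\sigma_i-\eta_0|_{L_i}=d\gamma_i$ for some $1$-form $\gamma_i$ on $L_i$.

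The heart of the argument, and the point where the generalization beyond the disjoint case lives, is to realize all the corrections $d\gamma_i$ simultaneously by a single global exact form. I would look for a $1$-form $\beta$ on $X$ with $\beta|_{L_i}=\gamma_i$ for every $i$ and then put $\eta=\eta_0+d\beta$, which is closed and satisfies $\eta|_{L_i}=\eta_0|_{L_i}+d\gamma_i=\sigma_i$. Away from the intersection points the $L_i$ are disjoint and such a $\beta$ is built locally near each surface and patched by a partition of unity, the restriction constraint being preserved under convex combinations. Near a double point $p\in L_i\cap L_j$ the two surfaces meet transversely, and since $\dim L_i+\dim L_j=\dim X$ this gives $T_pL_i\oplus T_pL_j=T_pX$; choosing coordinates in which $L_i$ and $L_j$ are complementary coordinate planes, the prescribed restrictions $\gamma_i,\gamma_j$ involve disjoint sets of coordinate differentials and can be added to a local $\beta$ restricting correctly to both sheets. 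This is exactly where the hypothesis that at most two Lagrangians pass through any point is used: it guarantees that at every point we face at most two restriction conditions on complementary subspaces, which are therefore never in conflict.

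The main obstacle I anticipate is this extension step across the double points; everything else is either a standard openness argument or a cohomological period computation. The delicate verification is that the local model near $p$ genuinely restricts to $\gamma_i$ on $L_i$ and to $\gamma_j$ on $L_j$, so that the $\beta$ obtained by patching still satisfies all constraints, and that a double point contributes no obstruction precisely because the two tangent planes are complementary. Once $\beta$, and hence $\eta$, is in hand, the freedom in the signs of the $c_i$ and of the $\sigma_i$ yields the prescribed sign of each induced area form, completing the proof.
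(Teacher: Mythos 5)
Your proposal is correct and follows essentially the same route as the paper: produce a closed form $\eta_0$ with prescribed periods over the $L_i$ using their linear independence, correct it by an exact form so that it restricts to a chosen area form on each $L_i$, and then add a small multiple of the result to $\omega$. The only (cosmetic) difference is in how the correcting exact form is realized --- you extend the primitives $\gamma_i$ to a single global $1$-form $\beta$ via local models at the double points and a partition of unity, whereas the paper takes $\sum_k d(\rho_k\pi_k^*\alpha_k)$ with tubular-neighbourhood projections and checks that the cross terms pull back to zero on the disk fibres; both hinge on exactly the same transversality/complementarity of the two sheets at a double point.
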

\begin{proof}
Let $a_1,\ldots,a_n$ be any real numbers. Since $H^2(X;\mathbb{R})$ is the dual space of second real homology there exists a closed 2-form $\eta$ on $X$ such that
\begin{equation*}
\int_{L_i}\eta=a_i,\quad i=1,\ldots,n.
\end{equation*}
Choose volume forms $\omega_i$ on $L_i$ for each $i$ such that
\begin{equation*}
\int_{L_i}\omega_i=\int_{L_i}\eta.
\end{equation*}
Let $j_i$ denote the embedding of $L_i$ into $X$. There exist 1-forms $\alpha_i$ on $L_i$ such that
\begin{equation*}
\omega_i-j_i^*\eta=d\alpha_i.
\end{equation*}
Let $\pi_i\colon\nu L_i\rightarrow L_i$ denote tubular neighbourhoods and choose cut-off functions $\rho_i(r)$ with support on the tubular neighbourhoods which depend only on the radius $r$ and are $1$ on the zero section. Define 1-forms
\begin{equation*}
\overline{\alpha}_i=\rho_i\pi_i^*\alpha_i
\end{equation*}
on the tubular neighbourhoods. Extend them by zero outside of the neighbourhood and set
\begin{equation*}
\eta'=\eta+\sum_id\overline{\alpha}_i.
\end{equation*}
We claim that $j_i^*\eta'=\omega_i$. This follows if we can show that
\begin{equation*}
j_i^*d\overline{\alpha}_k=0\quad\text{for $k\neq i$}.
\end{equation*}
This is clear if $L_k$ does not intersect $L_i$ by making the tubular neighbourhood of $L_k$ small enough so that it does not intersect $L_i$. Suppose that $L_k$ and $L_i$ intersect in a point $p$. We can assume that $L_i$ intersects $\nu L_k$ in a disk fibre of the tubular neighbourhood. We have
\begin{equation*}
d\overline{\alpha}_k=\rho_k'dr\wedge\pi_k^*\alpha_k+\rho_k\pi_k^*d\alpha_k.
\end{equation*}
By assumption, ${\pi_k}_*$ is the zero map on $T_qL_i$ for each point $q$ on the disk fibre $L_i\cap \nu L_k$. Therefore $d\overline{\alpha}_k$ is zero on any two vectors in $T_qL_i$. Hence $j_i^*d\overline{\alpha}_k=0$.

Consider the closed 2-form 
\begin{equation*}
\omega'=\omega+t\eta'.
\end{equation*}
For small positive $t$ the form $\omega'$ is symplectic. Since the $L_i$ are Lagrangian for $\omega$ we have $j_i^*\omega'=t\omega_i$. Hence the $L_i$ are now symplectic surfaces with (small) positive or negative volume, depending on the sign of $a_i$. 
\end{proof}
\begin{defn}
Let $\omega_0$ denote a symplectic form on $M$ given by Proposition \ref{prop symp surfaces}. We can assume that the symplectic form has the same sign on both $R$ and $S$. Let $T$ denote the symplectic torus of square $0$ obtained by smoothing the intersection between $R$ and $S$. The tori $R$ and $T$ intersect in a single positive transverse point. 
\end{defn}
The surfaces $R$ and $T$ together span a copy of $H$ in the intersection form, which we denote by $H_{RT}$. In summary the intersection form of $M$ is equal to
\begin{equation*}
Q_M=H(n)\oplus H_{RT}\oplus aH\oplus b(-E_8)
\end{equation*}
with certain integers $a,b\geq 1$.
\begin{defn}
We say that a self-diffeomorphism of $M$ satisfies $(\ast)$ if it is the identity on the first summand of $H(n)\oplus H(n)^\perp$. It then preserves the splitting $H(n)\oplus H(n)^\perp$.
\end{defn}

We will frequently use the following proposition that was proved in \cite{H}.
\begin{prop}\label{prop existence diffeom}
Every integral class in $H(n)^\perp$ can be mapped to any integral linear combination of $R$ and $T$ of the same square and divisibility by a self-diffeomorphism of the elliptic surface $M$ that satisfies $(\ast)$. Taking a multiple we see that we can map in this way any rational class in $H(n)^\perp$ to a rational linear combination of $R$ and $T$.
\end{prop}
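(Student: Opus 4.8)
The plan is to separate the statement into a purely lattice-theoretic assertion and a geometric realization step. Set $L := H(n)^\perp$. By the decomposition $Q_M = H(n)\oplus H_{RT}\oplus aH\oplus b(-E_8)$ recorded above, $L = H_{RT}\oplus aH\oplus b(-E_8)$ is an indefinite unimodular lattice which is \emph{even} (in the non-spin case $E(2m-1)$ the oddness of $Q_M$ is absorbed entirely by the summand $H(n)=H'$), and since $a\geq 1$ it contains the two orthogonal hyperbolic planes $H_{RT}\oplus H$. The target class $cR+dT$ lies in $H_{RT}\subset L$, has square $(cR+dT)^2 = 2cd$ and divisibility $\gcd(c,d)$. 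So it suffices to produce, first, an isometry $\phi\in O(L)$ with $\phi(v) = cR+dT$, and then a self-diffeomorphism of $M$ inducing $\phi\oplus\mathrm{id}_{H(n)}$.

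For the first step I would invoke the transitivity of the orthogonal group of an indefinite unimodular lattice. Because $L$ is even, unimodular (so its discriminant group is trivial), of rank $12n-4\geq 3$, and contains two hyperbolic planes, $O(L)$ acts transitively on vectors of a fixed square and fixed divisibility; two such vectors $v,w$ lie in the same orbit precisely when $v^2=w^2$ and $\mathrm{div}(v)=\mathrm{div}(w)$. Constructively this follows from Eichler's criterion: for an isotropic $e\in L$ and $u\in e^\perp$ the transvection
\begin{equation*}
t(e,u)(x)=x-\langle u,x\rangle\, e+\langle e,x\rangle\, u-\tfrac12\langle u,u\rangle\langle e,x\rangle\, e
\end{equation*}
is an isometry fixing $e$, and composing such maps (using the isotropic generators of $H_{RT}$ and of the auxiliary hyperbolic plane $H$) reduces any $v$ of square $2cd$ and divisibility $\gcd(c,d)$ to the normal form $cR+dT$. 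Since the hypothesis of the proposition is exactly that $v$ and $cR+dT$ share square and divisibility, such a $\phi$ exists.

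For the second step, extend $\phi$ to the isometry $\Phi=\mathrm{id}_{H(n)}\oplus\phi$ of the full form $Q_M$. By construction $\Phi$ fixes the fibre class $F\in H(n)$, hence fixes $c_1(M)=-(n-2)PD(F)$, so in particular it preserves $c_1(M)$ up to sign. The geometric input is then the realization theorem for orientation-preserving self-diffeomorphisms of $E(n)$: every isometry of $Q_M$ preserving $c_1(M)$ up to sign is induced by such a diffeomorphism. Applying this to $\Phi$ yields a self-diffeomorphism which is the identity on $H(n)$, i.e. satisfies $(\ast)$, and carries $v$ to $cR+dT$. The rational statement follows by clearing denominators and applying the integral case to a suitable integral multiple.

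The main obstacle is precisely this realization step. The algebraic transitivity is classical, but realizing the resulting lattice isometry by an \emph{honest smooth} self-diffeomorphism—rather than merely a homeomorphism, which Freedman's theorem already supplies—is the delicate point: one must exhibit actual diffeomorphisms generating the relevant subgroup of $O(L)$, such as the reflection in the embedded $(-2)$-sphere $S$ together with diffeomorphisms realizing the Eichler transvections along the rim torus $R$, the torus $T$ and the $E_8$-summands, and one must control these so that they act trivially on $H(n)$. This is the content of the construction in \cite{H}.
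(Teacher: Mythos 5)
The paper itself contains no argument for this proposition --- it is quoted from \cite{H} --- so the comparison is with the proof given there, whose architecture your proposal correctly reconstructs: a lattice-theoretic transitivity statement on $H(n)^\perp$, followed by realization of the resulting isometry (extended by the identity on $H(n)$, hence fixing $F$ and $c_1(M)$ and satisfying $(\ast)$) by a self-diffeomorphism via Friedman--Morgan and L\"onne. Your lattice step is sound: $H(n)^\perp = H_{RT}\oplus aH\oplus b(-E_8)$ is even, unimodular and indefinite with $a\geq 1$, so it splits off two hyperbolic planes, Eichler's criterion applies, and since unimodularity makes the discriminant group trivial, square and divisibility are exactly the orbit invariants; the reduction of the imprimitive case to the primitive one via $v=g v_0$ is also fine, as is the passage to rational classes by clearing denominators.

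The genuine flaw is the realization theorem you invoke: it is \emph{not} true that every isometry of $Q_M$ preserving $c_1(M)$ up to sign is induced by a self-diffeomorphism. Because $E(n)$, $n\geq 3$, has nonvanishing Seiberg--Witten invariants, a diffeomorphism's action on $\pm c_1(M)$ is correlated with its effect on the orientation of the positive part of $H^2(M;\mathbb{R})$: an isometry fixing $c_1(M)$ but reversing that orientation would force the Seiberg--Witten invariant of the canonical class to equal its own negative, hence to vanish, a contradiction. (For the $K3$ surface this is Donaldson's theorem that the image of the diffeomorphism group in the automorphism group of the lattice is the proper subgroup of isometries preserving the orientation of maximal positive subspaces.) The correct criterion --- and the one this paper itself uses in the proof of Lemma~\ref{diffeomorphism fi} --- is that the isometry fix $c_1(M)$ \emph{and} have real spinor norm one. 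Your construction in fact satisfies this, because the isometry you build is a product of Eichler transvections $t(e,u)$, and $t(e,su)$ for $s\in[0,1]$ is a path of real isometries from the identity, so each factor has spinor norm one; this is precisely the ``$i\to 0$'' deformation trick in Lemma~\ref{diffeomorphism fi}. So the gap is repairable in one line, but as written your quoted realization theorem is false and the verification of the spinor-norm hypothesis for your $\phi$ is missing; your closing paragraph gestures at the right issue (exhibiting honest diffeomorphisms rather than homeomorphisms) but misidentifies the precise cohomological condition that must be checked.
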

The following is clear:
\begin{lem}
Let $f\colon M\rightarrow M$ be an orientation preserving diffeomorphism. If $C$ and $D$ are homology classes on $M$ with $f_*C=D$, then $(f^{-1})^*PD(C)=PD(D)$.
\end{lem}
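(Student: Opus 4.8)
The plan is to derive this directly from the naturality of the cap product together with the fact that an orientation preserving diffeomorphism fixes the fundamental class, so the proof is purely formal. Recall that the Poincar\'e dual $PD(C)\in H^{n-k}(M)$ of a class $C\in H_k(M)$ is characterized by the equation $PD(C)\frown[M]=C$, where $[M]$ denotes the fundamental class and $\frown$ the cap product; capping with $[M]$ is the inverse of the Poincar\'e duality isomorphism and is in particular injective. Because $f$ is orientation preserving we have $f_*[M]=[M]$, and because $f$ is a diffeomorphism all the induced maps in homology and cohomology are isomorphisms. The single nontrivial input is the projection formula (naturality of the cap product): for any map $f$, any cohomology class $\beta$ and any homology class $a$ one has $f_*(f^*\beta\frown a)=\beta\frown f_*a$.

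The computation then runs as follows. I would set $\beta=(f^{-1})^*PD(C)$ and apply the projection formula with $a=[M]$. Since $f^*\circ(f^{-1})^*=(f^{-1}\circ f)^*=\mathrm{id}$ by contravariance of pullback, we have $f^*\beta=PD(C)$, whence
\[
\beta\frown[M]=f_*\bigl(f^*\beta\frown[M]\bigr)=f_*\bigl(PD(C)\frown[M]\bigr)=f_*C=D,
\]
where the second-to-last equality uses $PD(C)\frown[M]=C$ and the last uses $f_*[M]=[M]$. On the other hand $PD(D)\frown[M]=D$ by the very definition of the Poincar\'e dual. Since capping with $[M]$ is injective, the two equalities force $\beta=PD(D)$, that is, $(f^{-1})^*PD(C)=PD(D)$, as claimed.

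There is no real obstacle here beyond bookkeeping, which is why the statement is labelled clear; the only points deserving a moment's care are the orientation hypothesis and the contravariance of pullback. The orientation hypothesis is exactly what guarantees $f_*[M]=[M]$, and without it the argument (and the conclusion) fails. The contravariance is what collapses $f^*\beta$ back to $PD(C)$ and is also the reason the inverse diffeomorphism $f^{-1}$, rather than $f$, appears in the statement: pushing a homology class forward by $f$ corresponds to pulling the dual cohomology class back by $f^{-1}$.
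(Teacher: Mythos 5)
Your argument is correct and is the standard one; the paper itself offers no proof (it states the lemma as ``clear''), and your projection-formula computation, using $f_*[M]=[M]$ and the characterization $PD(C)\frown[M]=C$, is exactly the canonical justification. One cosmetic slip in your attributions: $f_*[M]=[M]$ is actually used in the \emph{first} equality, since the projection formula gives $f_*\bigl(f^*\beta\frown[M]\bigr)=\beta\frown f_*[M]$, whereas the final equality $f_*C=D$ is simply the hypothesis of the lemma.
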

We will now cover a large part of the positive cone by symplectic forms in the following way: We have a symplectic form $\omega_0$ so that the surfaces $F,W,R$ and $T$ are symplectic. The class of $\omega_0$ can be written as
\begin{equation*}
\omega_0=PD(\alpha_0 F+\beta_0 W+\gamma_0 R+\delta_0 T)+Z_0,
\end{equation*}
where $Z_0$ is a class in the real span of $aH\oplus b(-E_8)$. Using inflation with very large parameters and then dividing by a large number it follows that the class
\begin{equation*}
\omega=PD(\alpha F+\beta W+\gamma R+\delta T)
\end{equation*}
plus some arbitrarily small rest is represented by a symplectic form for all positive coefficients $\alpha,\beta,\gamma,\delta$. The second method we use are the actions of self-diffeomorphisms on cohomology. In particular, we can map according to Proposition \ref{prop existence diffeom} any rational class in $H^2(M;\mathbb{R})$ using a self-diffeomorphism to a rational linear combination of the Poincar\'e duals of $F,W,R$ and $T$. This will suffice to prove Theorem \ref{thm PD(F)+} in Section \ref{sect pdf+}, because in this situation all coefficients are positive. To prove the more general Theorem \ref{main thm F} in Section \ref{sect main thm f} we will introduce in Lemma \ref{diffeomorphism fi} another diffeomorphism that allows in some situations to change a negative coefficient in the expansion of $\omega$ into a positive one.

\section{Proof of Theorem \ref{thm PD(F)+} on positive classes}\label{sect pdf+}

We have the following lemma that proves one of the steps outlined above.
\begin{lem}\label{main lem}
Let $\omega$ be a class in $\mathcal{P}^{PD(F)}$. Then there exist a sequence of self-diffeomorphisms $\phi_k$ of the elliptic surface $M$ and classes $\sigma_k$ of the form
\begin{equation*}
\sigma_k=PD(\alpha F+\beta W+\gamma_k R+\delta_k T)
\end{equation*}
with $\beta>0$ such that $\phi_k^*\sigma_k$ converges to the class $\omega$. The diffeomorphisms $\phi_k$ satisfy $(\ast)$. If $\omega$ is a class in the subset $\mathcal{P}^{PD(F)+}$ then we can assume that all coefficients of $\sigma_k$ are positive.
\end{lem}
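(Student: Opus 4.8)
The plan is to decompose $\omega$ along the splitting $Q_M=H(n)\oplus H(n)^\perp$ and then to move its $H(n)^\perp$-component onto the hyperbolic plane $H_{RT}$ spanned by $R$ and $T$ by a diffeomorphism satisfying $(\ast)$, while leaving the $H(n)$-component fixed. First I would write $\omega=PD(\alpha F+\beta W)+\omega'$ with $\omega'\in H(n)^\perp$ (real span). Since $\omega\in\mathcal{P}^{PD(F)}$ we have $\beta=\omega\cdot PD(F)>0$, which already supplies the required sign of $\beta$. Let $C'$ be the homology class Poincar\'e dual to $\omega'$; it lies in the real span of $H(n)^\perp$.

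Next I would use density of the rational classes to pick rational $C_k'\in H(n)^\perp$ with $C_k'\to C'$. For each $k$, Proposition \ref{prop existence diffeom} supplies a self-diffeomorphism $\phi_k$ satisfying $(\ast)$ together with rationals $\gamma_k,\delta_k$ such that $(\phi_k)_*C_k'=\gamma_k R+\delta_k T$, equivalently $(\phi_k^{-1})_*(\gamma_k R+\delta_k T)=C_k'$. Setting $\sigma_k=PD(\alpha F+\beta W+\gamma_k R+\delta_k T)$ and using that $\phi_k^*$ fixes the first summand $H(n)$ and preserves the splitting, I obtain $\phi_k^*\sigma_k=PD(\alpha F+\beta W)+\phi_k^* PD(\gamma_k R+\delta_k T)$. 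The lemma relating $f_*$ and $(f^{-1})^*$ on Poincar\'e duals then identifies $\phi_k^* PD(\gamma_k R+\delta_k T)=PD(C_k')$, which converges to $PD(C')=\omega'$. Hence $\phi_k^*\sigma_k\to\omega$ with $\beta>0$, as required, and the $\phi_k$ satisfy $(\ast)$.

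Finally, for the positive case $\omega\in\mathcal{P}^{PD(F)+}$ I must additionally arrange $\alpha,\gamma_k,\delta_k>0$. The coefficient $\alpha>0$ is given, so the work lies in the signs of $\gamma_k$ and $\delta_k$. Here I would exploit $\omega'^2>0$: Poincar\'e duality preserves the intersection form, so $C'^2>0$, and since positivity of the square is an open condition the approximants may be chosen with $C_k'^2>0$. Because $R,T$ span a hyperbolic plane with $R^2=T^2=0$ and $R\cdot T=1$, this gives $2\gamma_k\delta_k=(\gamma_k R+\delta_k T)^2=C_k'^2>0$, so $\gamma_k$ and $\delta_k$ have a common sign; the freedom in Proposition \ref{prop existence diffeom} to hit any combination of $R$ and $T$ of the prescribed square and divisibility then lets me select the representative in the quadrant $\gamma_k,\delta_k>0$, making all coefficients of $\sigma_k$ positive. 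I expect this last positivity bookkeeping, together with keeping straight the direction of the maps $\phi_k$ versus $\phi_k^{-1}$ in the Poincar\'e-duality lemma, to be the only genuine subtlety; the convergence itself is immediate once the rational approximation and Proposition \ref{prop existence diffeom} are in place.
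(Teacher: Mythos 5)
Your proposal is correct and follows essentially the same route as the paper: decompose $\omega$ along $H(n)\oplus H(n)^\perp$, approximate $\omega'$ by rational classes, and use Proposition \ref{prop existence diffeom} to carry each approximant onto the span of $R$ and $T$ by a diffeomorphism satisfying $(\ast)$, with the positivity of $\gamma_k,\delta_k$ in the $\mathcal{P}^{PD(F)+}$ case coming from $\omega_k'^2=2\gamma_k\delta_k>0$ and the freedom to choose the target combination. Your version merely spells out the homological bookkeeping (the $C_k'$ and the Poincar\'e-duality lemma) and the quadrant selection a little more explicitly than the paper does.
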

\begin{proof}
We decompose the class $\omega$ as
\begin{equation*}
\omega=PD(\alpha F +\beta W) +\omega',
\end{equation*}
where $\omega'\in H(n)^\perp$ and $\beta>0$. There exists a sequence $\omega_k'$ of rational classes in $H(n)^\perp$ converging to the class $\omega'$. Using the second part of Proposition \ref{prop existence diffeom} there exist self-diffeomorphisms $\phi_k$ that satisfy $(\ast)$ and map
\begin{equation*}
\phi_k^*PD(\gamma_k R+\delta_k T)=\omega_k'
\end{equation*}
for certain rational numbers $\gamma_k,\delta_k$. Setting 
\begin{equation*}
\sigma_k=PD(\alpha F+\beta W+\gamma_k R+\delta_k T)
\end{equation*}
we get the first claim. If $\omega$ is a class in $\mathcal{P}^{PD(F)+}$ we can assume that all $\omega_k'^2$ are positive. Hence we can assume that $\gamma_k$ and $\delta_k$ are positive.
\end{proof}

Recall that we have a symplectic form $\omega_0$. As above, the class of this form can be written as
\begin{equation*}
\omega_0=PD(\alpha_0 F+\beta_0 W+\gamma_0 R+\delta_0 T)+Z_0,
\end{equation*}
where $Z_0$ is a class in the real span of $aH\oplus b(-E_8)$. We now prove Theorem \ref{thm PD(F)+}.

\begin{proof}
Let $\omega$ be a class in $\mathcal{P}^{PD(F)+}$. Choose a sequence $\sigma_k$ as in Lemma \ref{main lem}. Then
\begin{equation*}
\sigma_k=PD(\alpha F+\beta W+\gamma_k R+\delta_k T)
\end{equation*}
where all coefficients are positive. Consider the symplectic form $\omega_0$ with the symplectic surfaces $F,W,R,T$. We apply the inflation Lemma \ref{inflation} to the form $\omega_0$ which means that we can add to $\omega_0$ any linear combination of the classes $F,W, R, T$ with positive coefficients. This shows that the class
\begin{equation*}
N_k\sigma_k+Z_0
\end{equation*}
is represented by a symplectic form for any sufficiently large positive number $N_k$. Hence also the classes
\begin{equation*}
\eta_k=\sigma_k+\frac{1}{N_k}Z_0
\end{equation*}
are represented by symplectic forms. We know that $\phi_k^*\sigma_k$ converges to $\omega$. We can choose the numbers $N_k$ large enough so that $\frac{1}{N_k}\phi_k^*Z_0$ converges to $0$. Then $\phi_k^*\eta_k$ converges to $\omega$, hence $\omega\in\overline{\mathcal{C}}_M$. 
\end{proof}

\section{Proof of the main Theorem \ref{main thm F}}\label{sect main thm f}

We will use the following lemma which shows that certain automorphisms of the intersection form are realized by self-diffeomorphisms.
\begin{lem}\label{diffeomorphism fi}
For an integer $i$ let $f_i$ denote the map which is the identity on all summands of the intersection form except on $H(n)\oplus H_{RT}$, where it is given by
\begin{align*}
F&\mapsto F\\
W&\mapsto W+iT\\
R&\mapsto R-iF\\
T&\mapsto T.
\end{align*}
Then $f_i$ is induced by a self-diffeomorphism of $M$.
\end{lem}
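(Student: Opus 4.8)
The plan is to check that $f_i$ is an isometry of $Q_M$, to recognize it as a standard algebraic automorphism, and then to build it from two geometrically realizable self-diffeomorphisms. Since $f_i$ is the identity off $H(n)\oplus H_{RT}$ and fixes $F$ and $T$, verifying that it is an isometry reduces to checking that it preserves $F\cdot W=1$, the square $W\cdot W$, and $R\cdot T=1$, and that it keeps the cross terms between $\{F,W\}$ and $\{R,T\}$ zero; using $F\cdot T=F\cdot R=W\cdot R=W\cdot T=0$ and $F^2=R^2=T^2=0$ this is a direct computation. Reading off the formula, $f_i$ is precisely the Eichler--Siegel transvection attached to the orthogonal isotropic pair $F$ and $iT$, namely $E(F,u)(x)=x+(x\cdot F)\,u-(x\cdot u)\,F$ with $u=iT$ (one checks $u\perp F$ and $u^2=0$, which makes $E(F,u)$ an isometry). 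Because $E(F,u)\,E(F,u')=E(F,u+u')$ for isotropic $u,u'\perp F$, we have $f_i=f_1^{\,i}$ with $f_{-1}=f_1^{-1}$, so it suffices to realize $f_1=E(F,T)$ by a self-diffeomorphism and take powers.

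The key step is to split $f_1$ into two pieces. Let $\rho_S$ denote the Picard--Lefschetz reflection $x\mapsto x+(x\cdot S)\,S$ in the $(-2)$-class $S$. Using that smoothing $R\cup S$ yields $T$, so that $T=R+S$ (confirmed by $T^2=0$ and $R\cdot T=1$), one computes $S\cdot F=S\cdot W=0$, $S\cdot R=1$ and $S\cdot T=-1$, whence $\rho_S$ fixes $F$ and $W$ and interchanges $R$ and $T$. Let $\tau=E(F,R)$ be the transvection sending $W\mapsto W+R$, $T\mapsto T-F$ and fixing $F,R$. Since conjugating a transvection by an isometry transforms its defining data, $\rho_S\,\tau\,\rho_S=E(\rho_S F,\rho_S R)=E(F,T)=f_1$; a one-line check on the basis $F,W,R,T$ confirms $W\mapsto W+T$, $R\mapsto R-F$, with $F,T$ fixed.

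It then remains to realize the two building blocks. I would realize $\rho_S$ by the generalized (Seidel) Dehn twist along the Lagrangian $(-2)$-sphere $S$ provided by Lemma \ref{lem R S Lag}, whose action on $H_2$ is the Picard--Lefschetz reflection. I would realize $\tau$ by a torus twist supported in a collar of the $3$-torus $\partial\nu F=F\times S^1$ bounding a tubular neighbourhood of the fibre, twisting in the circle direction of the vanishing cycle carried by the rim torus $R$. Computing the sweep of an intersection $1$-cycle shows such a twist sends $x\mapsto x+(x\cdot R)\,F-(x\cdot F)\,R$, which is $E(F,R)^{\pm1}$; conjugating by $\rho_S$ then gives $f_{\pm1}$, and the powers give every $f_i$.

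The main obstacle is this last construction: one must produce the torus twist as a genuine orientation-preserving self-diffeomorphism of $M$ and verify that its effect on $H_2$ is exactly the transvection $E(F,R)^{\pm1}$ --- in particular that $F$ and $R$ are fixed while $W$ and $T$ pick up the correct rim-torus and fibre classes, and that no spurious second rim torus $a\wedge c$ appears. For this I expect the technique of \cite{H} underlying Proposition \ref{prop existence diffeom} to apply, since it already handles diffeomorphisms built from the fibre-neighbourhood and rim-torus geometry. Granting the two blocks, the identities $f_1=\rho_S\,\tau\,\rho_S$ and $f_i=f_1^{\,i}$ exhibit $f_i$ as a composition of self-diffeomorphisms, which proves the lemma.
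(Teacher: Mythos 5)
Your algebraic reductions are correct and worth keeping: the identities $f_i=E(F,iT)=f_1^{\,i}$, the relation $T=R+S$, the action of the reflection $\rho_S$ (using $S\cdot R=1$, $S\cdot T=-1$, $S\cdot F=S\cdot W=0$), and the conjugation formula $\rho_S\,E(F,R)\,\rho_S=E(F,T)=f_1$ all check out against the intersection data of the paper, and realizing $\rho_S$ by the Dehn twist on the embedded $(-2)$-sphere $S$ is standard. However, your route is entirely different from the paper's, and it has a genuine gap exactly where you flag one: in your approach the realization of the transvection $E(F,R)$ by a torus twist along $\partial\nu F$ carries the whole weight of the lemma, and you do not close it. The sweep computation you allude to does come out right (the Pontryagin product of $[\Sigma\cap\partial\nu F]$ with the twisting circle produces only the fibre class and one rim-torus class, so no second rim torus appears), but it has to be carried out, together with the verification that the twist by a loop of translations of $T^3$ is a well-defined orientation-preserving self-diffeomorphism and that classes supported away from $\nu F$ are unaffected. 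Your proposed fallback, namely that the technique behind Proposition \ref{prop existence diffeom} covers this, does not fill the hole: that proposition is a transitivity statement about classes of fixed square and divisibility and gives no construction of this particular transvection; in effect you would be assuming a realization result of the same kind you are trying to prove.

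The paper avoids all of this geometry. Its proof is purely lattice-theoretic: one checks that $f_i$ is an isometry of $Q_M$ fixing $F$ and hence $c_1(M)$, observes by letting $i$ vary over the reals and deforming to $i=0$ that $f_i$ lies in the identity component of the real stabilizer and therefore has spinor norm one, and then invokes the theorem of Friedman--Morgan \cite{FM} (see also \cite{Lo}) that every such isometry of $H^2(E(n);\mathbb{Z})$ is induced by a self-diffeomorphism. Your construction, if completed, would yield a more explicit and self-contained proof (and would exhibit $f_i$ as a product of a sphere twist and conjugated torus twists), but as written it reduces the lemma to an unproved geometric assertion; either carry out the torus-twist computation in full or cite \cite{FM} as the paper does.
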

\begin{proof}
It is easy to check that $f_i$ is an automorphism of the intersection form. The map $f_i$ leaves $F$ and hence $c_1(M)$ invariant. Letting $i$ be a real number and taking $i\rightarrow 0$ we see that $f_i$ has spinor norm one. This implies the claim by the work of Friedman-Morgan \cite{FM}; see also \cite{Lo}.
\end{proof}
We denote a diffeomorphism that induces $f_i$ by the same symbol. The induced automorphism $f_i^*$ on cohomology maps 
\begin{equation*}
\omega=PD((\alpha-i\gamma) F+\beta W+\gamma R+(\delta+i\beta) T)
\end{equation*}
to
\begin{equation*}
f_i^*\omega=PD(\alpha F+ \beta W+\gamma R+\delta T).
\end{equation*}
Note that the class $\omega$ can be positive even if $f_i^*\omega$ is not positive. The main difficulty in the case of Theorem \ref{main thm F} is that we have to approximate classes which are no longer positive by symplectic forms. However, the automorphism $f_i^*$ allows us in some cases to map a positive class to such a non-positive class. The positive class can then be reached by inflation. Hence we have to show that under our assumptions we can always find an integer $i$ such that $f_i^*$ maps a positive class to our given class.

Suppose for example that we want the class $\omega$ as above to be positive. We can assume that $\beta,\gamma>0$. Then $\omega$ is positive if and only if $\alpha-i\gamma>0$ and $\delta+i\beta>0$. This is possible only if 
\begin{equation*}
\alpha\beta+\gamma\delta>0,
\end{equation*}
which is equivalent to $\omega^2>0$ if $M$ is spin and $\omega^2>\beta^2$ if $M$ is non-spin. Note that $\beta=\omega\cdot PD(F)$. This is the reason why we have to restrict to the subset $\mathcal{P}^{PD(F)>}$ in the non-spin case.

We now begin with the proof of Theorem \ref{main thm F}. Fix a cohomology class $\omega$ in $H^2(M;\mathbb{R})$. If $M$ is the elliptic surface $E(2m)$ assume that $\omega$ is in the subset $\mathcal{P}^{PD(F)}$ and if $M$ is the surface $E(2m-1)$ assume that $\omega$ is in the subset $\mathcal{P}^{PD(F)>}$. We want to approximate $\omega$ by symplectic classes. Write
\begin{equation*}
\omega=PD(\alpha F+\beta W)+\omega'
\end{equation*}
where $\omega'$ is an element of the real span of $H(n)^\perp$. The following inequality for the coefficients of the class $\omega$ is a consequence of our assumptions. 
\begin{lem}\label{lem inequ}
We have
\begin{equation*}
\alpha>-\frac{\omega'^2}{2\beta}.
\end{equation*}
\end{lem}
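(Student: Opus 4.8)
The plan is to unwind the definitions of $\mathcal{P}^{PD(F)}$ and $\mathcal{P}^{PD(F)>}$ and to compute $\omega^2$ directly in terms of the coefficients $\alpha$, $\beta$ and the quantity $\omega'^2$. First I would use that the splitting $H(n)\oplus H(n)^\perp$ is orthogonal, so that
\[
\omega^2=(\alpha F+\beta W)^2+\omega'^2,
\]
where the first square is evaluated in the rank-two block $H(n)$.

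Next I would evaluate that block square from the two models of $H(n)$. Since $F$ is a fibre we have $F^2=0$ and $F\cdot W=1$ in both parities, while $W^2=0$ when $n$ is even and $W^2=1$ when $n$ is odd. Hence $(\alpha F+\beta W)^2=2\alpha\beta$ in the spin case and $(\alpha F+\beta W)^2=2\alpha\beta+\beta^2$ in the non-spin case. Now I would invoke the hypotheses. For the spin surface $E(2m)$, membership in $\mathcal{P}^{PD(F)}$ gives $\omega^2>0$ together with $\beta=\omega\cdot PD(F)>0$, and substituting yields $2\alpha\beta+\omega'^2>0$. For the non-spin surface $E(2m-1)$, membership in $\mathcal{P}^{PD(F)>}$ gives $\omega^2>\beta^2$ and again $\beta>0$; substituting the odd-case formula yields
\[
2\alpha\beta+\beta^2+\omega'^2>\beta^2,
\]
and the $\beta^2$ terms cancel to leave exactly $2\alpha\beta+\omega'^2>0$. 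In either case I would then divide by $2\beta>0$ to obtain $\alpha>-\omega'^2/(2\beta)$.

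The computation is routine, so there is no serious obstacle; the only point that requires care is the bookkeeping in the non-spin case, where the $W^2=1$ contribution produces precisely the $\beta^2$ term that is subtracted off in the definition of $\mathcal{P}^{>}$. This cancellation is what collapses both parities to the single inequality $2\alpha\beta>-\omega'^2$, and it is exactly the reason why $\mathcal{P}^{>}$, rather than $\mathcal{P}$ itself, is the correct cone to impose in the non-spin setting and why the conclusion of the lemma can be stated uniformly.
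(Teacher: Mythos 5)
Your proof is correct and is essentially identical to the paper's own argument: both expand $\omega^2=2\alpha\beta+\epsilon(n)\beta^2+\omega'^2$ with $\epsilon(n)=W^2$, use $\beta=\omega\cdot PD(F)>0$, and apply $\omega^2>0$ in the spin case and $\omega^2>\beta^2$ in the non-spin case so that the $\beta^2$ terms cancel. No differences worth noting.
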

\begin{proof}
In both cases $\beta>0$ and
\begin{align*}
0<\omega^2&=2\alpha\beta+\beta^2W^2+\omega'^2\\
&=2\alpha\beta +\epsilon(n)\beta^2+\omega'^2
\end{align*}
where $\epsilon(n)=0$ if $n$ is even and $\epsilon(n)=1$ if $n$ is odd. If $n=2m$ is even we get
\begin{equation*}
2\alpha\beta>-\omega'^2
\end{equation*}
hence
\begin{equation*}
\alpha>-\frac{\omega'^2}{2\beta}.
\end{equation*}
If $n=2m-1$ is odd we get by the assumption that $\omega$ is in $\mathcal{P}^{PD(F)>}$
\begin{equation*}
\beta^2<\omega^2=2\alpha\beta+\beta^2+\omega'^2.
\end{equation*}
This again implies the claim.
\end{proof}
We now prove a slightly technical lemma. The estimate in (b) will be used in Lemma \ref{lem technical 2} to show that we can find integers $i_k$ such that the automorphisms $f_{i_k}^*$ map a sequence of positive classes to another sequence which can then by mapped by diffeomorphisms to a sequence converging to our given class $\omega$.
\begin{lem}\label{lem technical 1}
There exists a sequence $\omega_k'$ of rational classes in $H(n)^\perp$ converging to $\omega'$ with the following properties:
\begin{enumerate}
\item $\omega_k'^2>\omega'^2$ for all indices $k$.
\item Write $\omega_k'=\frac{1}{A_k}\tau_k$ where $A_k$ is a positive rational number and $\tau_k$ is an indivisible integral class in $H(n)^\perp$. Then there exist integers $i_k$ with
\begin{equation*}
\alpha>\frac{i_k}{A_k}>-\frac{\omega'^2}{2\beta}.
\end{equation*}
\end{enumerate}
\end{lem}
\begin{proof}
Let $\omega_k''$ be any rational sequence in $H(n)^\perp$ converging to $\omega'$. We can assume that 
\begin{equation*}
\omega_k''^2>\omega'^2
\end{equation*}
because every neighbourhood of $\omega'$ contains rational elements with this property. Write
\begin{equation*}
\omega_k''=\frac{1}{B_k}\mu_k
\end{equation*}
where $B_k$ is a positive rational number and $\mu_k$ is integral and indivisible. For each $k$ we can find an integral basis $e_1,e_2,\ldots,e_r$ of the lattice $H(n)^\perp$ such that $e_1=\mu_k$. The basis depends on $k$, but we do not write the index. Let $C_k$ be an arbitrary sequence of positive integers converging to infinity. Consider the rational number $A_k=C_kB_k$ and the integral class $\tau_k=C_k\mu_k+e_2$. Then $\tau_k$ is indivisible. Define
\begin{equation*}
\omega_k'=\frac{1}{A_k}\tau_k=\frac{1}{B_k}\left(\mu_k+\frac{1}{C_k}e_2\right).
\end{equation*}
If we choose the integers $C_k$ large enough the sequence $\omega_k'$ converges to $\omega'$ (note that $e_2$ depends on $k$). Moreover, we can assume that $\omega_k'^2>\omega'^2$. If $C_k$ and hence $A_k$ is large enough we can find by Lemma \ref{lem inequ} an integer $i_k$ such that
\begin{equation*}
\alpha>\frac{i_k}{A_k}>-\frac{\omega'^2}{2\beta}.
\end{equation*}

\end{proof}
Let $\omega_k'=\frac{1}{A_k}\tau_k$ be the sequence from Lemma \ref{lem technical 1}. Since $\tau_k$ is an integral indivisible class in $H(n)^\perp$ we can find by Proposition \ref{prop existence diffeom} a self-diffeomorphism $\phi_k$ of the elliptic surface $M$ satisfying $(\ast)$ such that
\begin{equation*}
\tau_k=\phi_k^*PD(R+\delta_k T)
\end{equation*}
for certain integers $\delta_k$. We get
\begin{equation}\label{eqn omega_k'}
\omega_k'=\phi_k^*PD\left(\frac{1}{A_k}R+\frac{\delta_k}{A_k} T\right).
\end{equation}
This implies that the sequence
\begin{equation*}
\phi_k^*PD\left(\alpha F+\beta W+\frac{1}{A_k}R+\frac{\delta_k}{A_k} T\right)
\end{equation*}
converges to our given class $\omega$. Consider the automorphism $f_i^*$ from Lemma \ref{diffeomorphism fi} and apply $(f_i^{-1})^*$ to the sequence
\begin{equation*}
PD\left(\alpha F+\beta W+\frac{1}{A_k}R+\frac{\delta_k}{A_k} T\right)
\end{equation*}
where $i=i_k$ for the integer $i_k$ from Lemma \ref{lem technical 1}. This implies that there exist self-diffeomorphisms $\psi_k=f_{i_k}\circ\phi_k$ such that $\psi_k^*\sigma_k$ converges to $\omega$, where
\begin{equation*}
\sigma_k=PD\left(\left(\alpha-\frac{i_k}{A_k}\right)F+\beta W +\frac{1}{A_k}R+\left(\frac{\delta_k}{A_k}+i_k\beta\right)T\right)
\end{equation*}
\begin{lem}\label{lem technical 2}
The numbers $\alpha-\frac{i_k}{A_k}$ and $\frac{\delta_k}{A_k}+i_k\beta$ are positive.
\end{lem}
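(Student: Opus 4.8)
The plan is to treat the two inequalities separately. The first one, $\alpha-\frac{i_k}{A_k}>0$, is immediate: it is exactly the left-hand inequality of Lemma \ref{lem technical 1}(b), which asserts $\alpha>\frac{i_k}{A_k}$. So no work is needed there.

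The second inequality, $\frac{\delta_k}{A_k}+i_k\beta>0$, requires relating the integer $\delta_k$ to the square $\omega_k'^2$. Here I would use that $\phi_k$ is an orientation preserving self-diffeomorphism, hence an isometry of the cup-product form, together with the identity \eqref{eqn omega_k'}, which reads $\omega_k'=\phi_k^*PD\left(\frac{1}{A_k}R+\frac{\delta_k}{A_k}T\right)$. Since $R$ and $T$ span the hyperbolic summand $H_{RT}$ with $R^2=T^2=0$ and $R\cdot T=1$, a direct computation gives
\begin{equation*}
\omega_k'^2=\left(\frac{1}{A_k}R+\frac{\delta_k}{A_k}T\right)^2=\frac{2\delta_k}{A_k^2}.
\end{equation*}

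With this identity in hand, the claim follows by adding two inequalities. From Lemma \ref{lem technical 1}(a) we have $\omega_k'^2>\omega'^2$, so $\frac{2\delta_k}{A_k^2}>\omega'^2$, and multiplying by $\frac{A_k}{2}>0$ yields $\frac{\delta_k}{A_k}>\frac{A_k\omega'^2}{2}$. On the other hand, the right-hand inequality of Lemma \ref{lem technical 1}(b) reads $\frac{i_k}{A_k}>-\frac{\omega'^2}{2\beta}$; since $\beta>0$, multiplying by $A_k\beta$ gives $i_k\beta>-\frac{A_k\omega'^2}{2}$. Adding these two inequalities, the $\pm\frac{A_k\omega'^2}{2}$ terms cancel and we obtain $\frac{\delta_k}{A_k}+i_k\beta>0$, as desired. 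Note that this argument makes no use of the sign of $\omega'^2$, which is what allows it to cover the general (non-positive) case.

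The one step carrying real content is the computation $\omega_k'^2=\frac{2\delta_k}{A_k^2}$: it converts the square condition (a) of Lemma \ref{lem technical 1} into an inequality on the coefficient $\delta_k$ that can be balanced against the arithmetic condition (b) on $i_k$. Everything else is bookkeeping, and I expect no genuine obstacle provided the earlier lemmas are quoted correctly; the only care needed is to keep track of the signs of $A_k$ and $\beta$ (both positive) when clearing denominators.
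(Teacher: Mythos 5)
Your proposal is correct and follows essentially the same route as the paper: both rest on the identity $\omega_k'^2=\frac{2\delta_k}{A_k^2}$ (coming from $\phi_k$ being an isometry of the intersection form) combined with parts (a) and (b) of Lemma \ref{lem technical 1}. The only cosmetic difference is where you insert the inequality $\omega_k'^2>\omega'^2$ (into the $\delta_k$ term rather than the $i_k$ term), which changes nothing of substance.
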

\begin{proof}
The first claim is clear by construction in Lemma \ref{lem technical 1}. Note that by formula \eqref{eqn omega_k'} above
\begin{equation*}
\omega_k'^2=\frac{2}{A_k^2}\delta_k
\end{equation*}
and by construction
\begin{equation*}
\frac{i_k}{A_k}>-\frac{\omega'^2}{2\beta}>-\frac{\omega_k'^2}{2\beta}.
\end{equation*}
Hence
\begin{equation*}
\frac{\delta_k}{A_k}=\frac{1}{2}\omega_k'^2A_k
\end{equation*}
and
\begin{equation*}
i_k\beta>-\frac{1}{2}\omega_k'^2A_k.
\end{equation*}
This implies the second claim.
\end{proof}
Note that all coefficients of $\sigma_k$ are positive. We now argue as in the proof of Theorem \ref{thm PD(F)+}: There exist classes
\begin{equation*}
\eta_k=\sigma_k+\frac{1}{N_k}Z_0
\end{equation*}
represented by symplectic forms such that $\psi_k^*\eta_k$ converges to $\omega$. Hence $\omega\in \overline{\mathcal{C}}_M$. This proves Theorem \ref{main thm F}.

\bibliographystyle{amsplain}

\bigskip
\bigskip

\end{document}